\newcommand{\oR}{{\mathbb R}}
\newcommand{\oN}{{\mathbb N}}
\newcommand{\EE}{{\mathbb E}}
\newcommand{\PP}{{\mathbb P}}
\newcommand{\var}{{\rm{Var}}}
\newcommand{\mse}{{\rm{mse}}}
\newcommand{\bias}{{\rm{bias}}}
\newtheorem{res}{Lemma}
\newtheorem{cor}{Corollary}
\newtheorem{thm}{Theorem}
\newtheorem{prop}{Proposition}
\newenvironment{proof}[1]{\noindent{\bf Proof of {#1}:}}{\hfill $\square$ \\}
\begin{document}
\bibliographystyle{plain}
\thispagestyle{empty}
\begin{center}
{\Large\bf Infill asymptotics and bandwidth selection
for kernel estimators of spatial intensity functions}\\[.4in]

\noindent
{\large M.N.M. van Lieshout}\\[.1in]
\noindent
{\em CWI, P.O.~Box 94079, NL-1090 GB  Amsterdam\\
University of Twente, P.O.~Box 217, NL-7500 AE Enschede\\
The Netherlands}\\[.1in]
\end{center}

\begin{verse}
{\footnotesize
\noindent
{\bf Abstract}\\
\noindent
We investigate the asymptotic mean squared error of kernel
estimators of the intensity function of a spatial point process.
We show that when $n$ independent copies of a point process in
$\oR^d$ are superposed, the optimal bandwidth $h_n$ is
of the order $n^{-1/(d+4)}$ under appropriate smoothness conditions on
the kernel and true intensity function. We apply the Abramson principle
to define adaptive kernel estimators and show that asymptotically
the optimal adaptive bandwidth is of the order $n^{-1/(d+8)}$ under
appropriate smoothness conditions. \\[0.1in]
%leading bias term is of order $h_n^4$ rather than $h_n^2$ 

\noindent
{\em Keywords \& Phrases:}
Adaptive kernel estimator; Bandwidth; Infill asymptotics;
Intensity function; Kernel estimator;
Mean squared error; Point process. \\[0.1in]
\noindent
{\em 2010 Mathematics Subject Classification:}
60G55, % point processes
62G07, % density estimation
60D05. % stochastic geometry 
}
\end{verse}

\section{Introduction}

Often the first step in the analysis of a spatial point pattern is to estimate
its intensity function. Various non-parametric estimators are available to do so.
Some techniques are based on local neighbourhoods of a point, expressed for
example by its nearest neighbours \cite{Gran98}, its Voronoi \cite{Ord78} or
Delaunay tessellation \cite{Scha07,SchaWeyg00}. By far the most popular technique,
however, is kernel smoothing \cite{Digg85}. Specifically, let $\Phi$ be a point 
process that is observed in a bounded open subset $\emptyset \neq W$ of $\oR^d$
and assume that its first order moment measure exists as a $\sigma$-finite 
Borel measure and is absolutely continuous with respect to Lebesgue measure with 
a Radon--Nikodym derivative $\lambda : \oR^d \to [0,\infty)$  known as its 
intensity function.  A kernel estimator of $\lambda$ based on $\Phi \cap W$ 
takes the form
\begin{equation}
\label{e:kernel}
\widehat{ \lambda(x_0;h)}
=
\widehat{ \lambda(x_0; h, \Phi, W)}
=
\frac{1}{h^d}
 \sum_{y\in\Phi\cap W} 
\kappa\left( \frac{x_0 - y}{h}\right),
\quad x_0 \in W.
\end{equation}
The function $\kappa: \oR^d \rightarrow [0,\infty)$ is supposed to
be kernel, that is, a $d$-dimensional symmetric probability density
function \cite[p.~13]{Silv86}. The choice of bandwidth $h>0$ determines
the amount of smoothing.  In principle, the support of 
$\kappa((x_0 - y)/ h)$ as a function of $y$ could overlap the
complement of $W$. Therefore, various edge corrections have been
proposed  \cite{BermDigg89,Lies12}. In the sequel, though, we will be
concerned with very small bandwidths, so this aspect may be ignored.

The aim of this paper is to derive asymptotic expansions for the
bias and variance of (\ref{e:kernel}) in terms of the bandwidth.
This problem is well known when dealing with probability density 
functions. Indeed, there exists a vast literature, for example the 
textbooks \cite{BowmAzza97,Silv86,WandJone94} and the references therein.
In a spatial context, bandwidth selection is dominated by ad hoc
\cite{BermDigg89} and non-parametric methods \cite{CronLies18}. The
first rigorous study into bandwidth selection to the best of our
knowledge is that by Lo \cite{Lo17} who studies infill asymptotics
for spatial patterns consisting of independent and identically
distributed points. Our goal is to extend his approach to point
processes that may exhibit interactions between their points
and to investigate adaptive versions thereof.

The plan of this paper is as follows. In Section~\ref{S:local}
we focus on the regime in which $n$ independent copies of
the same point process are superposed and the bandwidth $h_n$ tends
to zero as $n$ tends to infinity but does not depend on the points
of the pattern. We derive Taylor expansions and deduce the asymptotically
optimal bandwidth. Intuitively, however, one feels that in sparse
regions more smoothing is necessary then in regions that are rich
in points. Indeed, in the context of estimating a probability density
function, Abramson \cite{Abra82a} proposed to scale the bandwidth in
proportion to the square root of the density. Analogously, in
Section~\ref{S:adaptive} we let $h_n$ decrease in proportion to
the square root of the intensity function and show that by doing so
the bias can be reduced. For the sake of readability, all proofs are 
deferred to Section~\ref{S:proofs}. 

\section{Infill asymptotics}
\label{S:local}

Let $\Phi_1, \Phi_2, \ldots$ be independent and identically distributed
point processes for which the first order moment measure exists, is locally
finite and admits an  intensity function $\lambda: W \to [0,\infty)$. 
For $n\in \oN$, let 
\[
Y_n = \bigcup_{i=1}^n \Phi_i
\]
denote the union.  Upon taking the limit for $n\to \infty$, one obtains an 
asymptotic regime known as `infill asymptotics' \cite{Ripl88}.
Since the $\Phi_i$ are independent, the intensity function of
$Y_n$ is $n\lambda( \cdot )$. Therefore $\lambda(x_0)$, $x_0 \in W$, may be estimated 
by 
\begin{equation} \label{e:lambdahat}
\widehat{\lambda(x_0)} :=  \frac{\widehat{ \lambda(x_0; h, Y_n, W)} }{n}
  = \frac{1}{n} \sum_{i=1}^n
  \widehat{ \lambda( x_0; h, \Phi_i, W ) }.
\end{equation}

\begin{res} \label{L:moments}
Let $\Phi$ be a point process observed in a bounded open subset
$\emptyset \neq W \subset \oR^d$ whose factorial moment measures
exist up to second order and are absolutely continuous with intensity
function  $\lambda$ and second order product densities $\rho^{(2)}$.
Let $\kappa$ be a kernel. Then the first two moments of (\ref{e:kernel}) are 
\[
\EE \left[ \widehat{\lambda(x_0; h, \Phi, W)} \right] =
\frac{1}{h^d} \int_{ W} 
\kappa\left( \frac{x_0 - u}{h} \right) \lambda(u) du
\]
and
\begin{eqnarray*}
\EE \left[ \left( \widehat{\lambda(x_0; h, \Phi, W)} \right)^2 \right]  
&  = &
\frac{1}{h^{2d}} \int_{W} \int_{W}
   \kappa\left( \frac{x_0 - u}{h} \right)
   \kappa\left( \frac{x_0 - v}{h} \right)
 \rho^{(2)}(u,v) du dv
\\
& + &
\frac{1}{h^{2d}}
\int_{W}  \kappa\left( \frac{x_0 - u}{h} \right)^2  \lambda(u) du.
\end{eqnarray*}
\end{res}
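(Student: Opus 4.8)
The plan is to treat both moments as direct applications of the Campbell-type summation formulae that define the intensity and the second order product density. For the first moment, the estimator $(\ref{e:kernel})$ is a sum of a fixed nonnegative measurable function over the points of $\Phi \cap W$, so I would write
\[
\widehat{\lambda(x_0; h, \Phi, W)} = \sum_{y \in \Phi} g(y), \qquad g(y) := \frac{1}{h^d} \kappa\left( \frac{x_0 - y}{h} \right) \mathbf{1}\{ y \in W \},
\]
and invoke the first order Campbell formula $\EE\big[\sum_{y \in \Phi} g(y)\big] = \int g(u)\, \lambda(u)\, du$. Since $\kappa$ is a probability density and hence integrable, the integral is well defined, and substituting the explicit form of $g$ immediately yields the stated expression for $\EE[\widehat{\lambda(x_0; h, \Phi, W)}]$.

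For the second moment I would expand the square of the sum and separate the contributions of coincident and distinct index pairs,
\[
\left( \sum_{y \in \Phi} g(y) \right)^2 = \sum_{y \in \Phi} g(y)^2 + \sJM_{y, z \in \Phi} g(y)\, g(z),
\]
which is the crux of the argument. The diagonal term is again a single sum over the points, so the first order Campbell formula applies once more and produces $\int_W g(u)^2\, \lambda(u)\, du$; inserting $g$ gives the second integral of the statement, with its $h^{-2d}$ prefactor. The off-diagonal term, being a sum over ordered pairs of \emph{distinct} points, is exactly the object governed by the second order factorial moment measure. By the assumed absolute continuity this measure admits the density $\rho^{(2)}$, so $\EE\big[\sJM_{y,z}\, g(y)\, g(z)\big] = \int_W \int_W g(u)\, g(v)\, \rho^{(2)}(u,v)\, du\, dv$, which upon inserting $g$ yields the first double integral of the statement.

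The only point requiring genuine care is the clean split of the squared counting sum into its diagonal and off-diagonal parts, together with the correct matching of each part to its moment object: the coincidences sitting on the diagonal are controlled by the ordinary (first) moment measure, whereas only the genuinely distinct pairs feed into $\rho^{(2)}$. This separation is precisely the defining property of the factorial moment measures, so no further simpleness or regularity hypothesis beyond those already imposed is needed. Summing the diagonal and off-diagonal expectations then gives the claimed formula for $\EE\big[(\widehat{\lambda(x_0; h, \Phi, W)})^2\big]$, completing the proof.
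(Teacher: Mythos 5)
Your proposal is correct and is exactly the argument the paper has in mind: the paper dispenses with the proof by noting it "follows directly from the definition of product densities," which is precisely your diagonal/off-diagonal decomposition of the squared sum combined with the first-order Campbell formula and the defining property of the second-order factorial moment measure. Your write-up simply makes explicit what the paper leaves to the cited reference.
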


The proof follows directly from the definition of product densities,
see for example \cite[Section~4.3.3]{SKM}.
Provided $\lambda(\cdot) > 0$, the variance of $\widehat{\lambda(x_0)}$
can expressed in terms of the pair correlation function $g: W\times W\to \oR$
defined by $g(u,v) = \rho^{(2)}(u,v)/ (\lambda(u) \lambda(v))$ as
\[
\frac{1}{h^{2d}} \left[
\int_{W \times W}
   \kappa\left( \frac{x_0 - u}{h} \right)
   \kappa\left( \frac{x_0 - v}{h} \right)
   ( g(u,v) - 1 ) \lambda(u) \lambda(v) du dv
 +
 \int_{W}    \kappa\left( \frac{x_0 - u}{h} \right)^2   \lambda(u) du
 \right].
\]
For Poisson processes, the first integral vanishes as $g\equiv 1$.

In this paper, we will restrict ourselves to kernels that belong to the Beta class
\begin{equation}
\label{e:beta}
\kappa^\gamma(x) = 
\frac{\Gamma \left(d/2 + \gamma + 1\right)}{
\pi^{d/2} \Gamma \left(\gamma+1\right)}
(1 - x^T x)^{\gamma} \, 1\{ x \in b(0, 1) \},
\quad x\in\oR^d,
\end{equation}
for $\gamma \geq 0$. Here $b(0,1)$ is the closed unit ball in $\oR^d$ centred at the
origin. The normalising constant will be abbreviated by
\begin{equation}
\label{e:cBeta}
c(d,\gamma) = \int_{b(0,1)} (1 - x^Tx)^\gamma dx =
\frac{\pi^{d/2} \Gamma(\gamma+1) }{ \Gamma(d/2 + \gamma + 1)}, \quad d\in\oN, \gamma \geq 0.
\end{equation}
Note that Beta kernels are supported on the compact unit ball and that their smoothness
is governed by the parameter $\gamma$. Indeed, the box kernel defined by $\gamma=0$
is constant and therefore continuous on the interior of the unit ball; the Epanechnikov 
kernel corresponding to the choice $\gamma=1$ is Lipschitz continuous. For $\gamma > k$ 
the function $\kappa^\gamma$ is $k$ times continuously differentiable on $\oR^d$.

The following Lemma collects further basic properties of the Beta kernels.
The proof can be found in Section~\ref{S:proofsB}.

\begin{res} \label{L:Beta}
For the Beta kernels $\kappa^\gamma$, $\gamma \geq 0$, defined in
equation (\ref{e:beta}), the  integrals
  \[
  \int_{\oR} x_i \kappa^\gamma(x) dx_i = 0 =
  \int_{b(0,1)}  x_i x_j \kappa^\gamma(x) dx_1 \cdots dx_d
\]
vanish for all $i, j \in \{ 1, \dots, d\}$ such that $i\neq j$.
Furthermore
\[
  Q(d, \gamma) := \int_{\oR^d} \kappa^\gamma(x)^2 dx = \frac{
    c(d,2\gamma)}{c(d,\gamma)^2}
\]
is finite and so are, for all $i=1, \dots, d$, 
\[
  V(d,\gamma) :=  \int_{-\infty}^\infty \cdots \int_{-\infty}^\infty 
   x_i^2 \kappa^\gamma(x) dx_1 \cdots dx_d = \frac{1}{d+2\gamma+2}, 
\]
\[
  V_4(d, \gamma)  :=  \int_{-\infty}^\infty \cdots \int_{-\infty}^\infty 
   x_i^4 \kappa^\gamma(x) dx_1 \cdots dx_d = \frac{3}{(d+2\gamma+2)(d+2\gamma+4)} 
\]
as well as, for $d\geq 2$ and $i\neq j \in \{ 1, \dots, d \}$,
\[
  V_2(d, \gamma) := \int_{-\infty}^\infty \cdots \int_{-\infty}^\infty 
   x_i^2 x_j^2 \kappa^\gamma(x) dx_1 \cdots dx_d
  = \frac{1}{(d+2\gamma+2) (d+2\gamma+4)}.
\]
Their values do not depend on the particular choices of $i$ and $j$.
\end{res}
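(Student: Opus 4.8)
The plan is to reduce all five assertions to a single master formula for the even monomial moments of the weight $(1-x^Tx)^\gamma$ over the unit ball, from which each statement follows by a specialisation together with elementary manipulations of the Gamma function.

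The two vanishing statements are pure symmetry. The function $(1-x^Tx)^\gamma\,1\{x\in b(0,1)\}$ is invariant under the reflection $x_i\mapsto -x_i$ in any single coordinate, whereas the factor $x_i$ (respectively $x_ix_j$ with $i\neq j$) is odd under that reflection; hence the integrand is odd in $x_i$ over a symmetric domain and the integral vanishes. The identity for $Q(d,\gamma)$ is equally immediate: squaring $\kappa^\gamma$ replaces $\gamma$ by $2\gamma$ in the exponent and $c(d,\gamma)$ by $c(d,\gamma)^2$ in the prefactor, so that $\int(\kappa^\gamma)^2=c(d,\gamma)^{-2}\int_{b(0,1)}(1-x^Tx)^{2\gamma}\,dx=c(d,2\gamma)/c(d,\gamma)^2$, which is finite because $2\gamma\geq 0$.

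For the remaining three moments I would first establish, for nonnegative integers $m_1,\dots,m_d$ with $M=\sum_i m_i$, the master formula
\[
\int_{b(0,1)}\prod_{i=1}^d x_i^{2m_i}\,(1-x^Tx)^\gamma\,dx
=\frac{\Gamma(\gamma+1)\prod_{i=1}^d\Gamma(m_i+\tfrac12)}{\Gamma\bigl(d/2+\gamma+1+M\bigr)}.
\]
Passing to polar coordinates $x=r\omega$ separates this into the radial integral $\int_0^1 r^{2M+d-1}(1-r^2)^\gamma\,dr$, which equals $\tfrac12\,\Gamma(M+d/2)\Gamma(\gamma+1)/\Gamma(M+d/2+\gamma+1)$ after the substitution $t=r^2$ identifies it with half a Beta integral, times the angular integral $\int_{S^{d-1}}\prod_i\omega_i^{2m_i}\,d\sigma(\omega)$. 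The latter I would evaluate by the classical Gaussian device: comparing $\int_{\oR^d}\prod_i x_i^{2m_i}e^{-x^Tx}\,dx=\prod_i\Gamma(m_i+\tfrac12)$ with its polar form shows that this angular integral equals $2\prod_i\Gamma(m_i+\tfrac12)/\Gamma(M+d/2)$. Multiplying the two pieces, the factors $\Gamma(M+d/2)$ cancel and the master formula drops out; setting all $m_i=0$ recovers $c(d,\gamma)$, a useful consistency check.

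It then remains only to specialise and divide by $c(d,\gamma)$. Taking $(m_i)=(1,0,\dots,0)$ gives $V$, the choice $(2,0,\dots,0)$ gives $V_4$, and $(1,1,0,\dots,0)$ gives $V_2$; in each case the quotient of Gamma functions collapses through the recursion $\Gamma(z+1)=z\Gamma(z)$ and the values $\Gamma(\tfrac32)=\tfrac12\sqrt\pi$, $\Gamma(\tfrac52)=\tfrac34\sqrt\pi$ to the stated rational functions of $d+2\gamma$. Independence of the particular indices $i,j$ is automatic, since the master formula depends on $(m_1,\dots,m_d)$ only through the multiset of exponents. The one genuinely non-routine step is the evaluation of the angular integral; everything else is bookkeeping with the Gamma function. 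One could alternatively avoid polar coordinates altogether by integrating out a single coordinate at a time, which converts an integral over $b(0,1)\subset\oR^d$ at parameter $\gamma$ into one over $b(0,1)\subset\oR^{d-1}$ at parameter $\gamma+m_d+\tfrac12$, yielding the same formula by induction on $d$.
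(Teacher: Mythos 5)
Your proposal is correct, and all its specialisations check out: the master formula
\(
\int_{b(0,1)}\prod_i x_i^{2m_i}(1-x^Tx)^\gamma\,dx
=\Gamma(\gamma+1)\prod_i\Gamma(m_i+\tfrac12)\big/\Gamma(d/2+\gamma+1+M)
\)
is right (the radial substitution $t=r^2$ gives $\tfrac12 B(M+d/2,\gamma+1)$, the Gaussian comparison gives the spherical moment $2\prod_i\Gamma(m_i+\tfrac12)/\Gamma(M+d/2)$, and the exponent patterns $(1,0,\dots,0)$, $(2,0,\dots,0)$, $(1,1,0,\dots,0)$ collapse, after dividing by $c(d,\gamma)$, to exactly $1/(d+2\gamma+2)$, $3/((d+2\gamma+2)(d+2\gamma+4))$ and $1/((d+2\gamma+2)(d+2\gamma+4))$, with $m\equiv 0$ recovering $c(d,\gamma)$ as you note). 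However, your main route is genuinely different from the paper's: the paper never passes to polar coordinates, but instead uses precisely the coordinate-peeling recursion you mention only as an afterthought — it handles $d=1$ directly via the substitution $v=x^2$, identifying $V(1,\gamma)$ and $V_4(1,\gamma)$ as Beta integrals, and for $d>1$ integrates out the innermost coordinate with $t=s^2/(1-\|x\|_{d-1}^2)$, converting the $d$-dimensional integral at parameter $\gamma$ into a $(d-1)$-dimensional one at parameter $\gamma+\tfrac32$ or $\gamma+\tfrac52$, e.g.\ $V_2(d,\gamma)=B(\tfrac32,\gamma+1)\,c(d-1,\gamma+\tfrac32)\,V(d-1,\gamma+\tfrac32)/c(d,\gamma)$, so its bookkeeping is a chain of one-dimensional Beta identities. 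Your polar--Gaussian argument buys generality: one closed form for all even monomial moments, index-independence automatically (the formula depends only on the multiset of exponents, which the paper instead attributes to symmetry), and immediate access to the higher moments that resurface later via integration by parts (cf.\ Lemma~\ref{L:PI4-high} and Proposition~\ref{P:dim2}); its one non-elementary ingredient, the spherical moment integral, is handled cleanly by the Gaussian device. The paper's recursion is more pedestrian but entirely self-contained, needing nothing beyond the explicit constant $c(d,\gamma)$ and one-variable substitutions. Both are complete proofs of the lemma; yours is arguably the tidier and more reusable derivation, and your closing alternative is in fact the paper's actual argument.
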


For the important special case $d=2$, 
\[
  Q(2, \gamma) = \frac{(\gamma+1)^2}{(2\gamma + 1) \pi}.
\]

Lemma~\ref{L:moments} can be used to derive the mean squared error of
(\ref{e:lambdahat}). Its proof can be found in Section~\ref{S:proofs-nA}.

\begin{prop} \label{P:mse}
Let $\Phi_1, \Phi_2, \ldots$ be independent and identically distributed
point processes observed in a bounded open subset $\emptyset \neq W \subset \oR^d$.
Assume that their factorial moment measures exist up to second order
and are absolutely continuous with strictly positive intensity function
$\lambda: W \to (0,\infty)$ and second order product densities $\rho^{(2)}$.
Write
\(
Y_n = \bigcup_{i=1}^n \Phi_i
\)
for the union, $n\in \oN$, and let $\kappa^\gamma(x)$ be a Beta kernel
(\ref{e:beta}) with $\gamma \geq 0$. Then the mean squared error of
(\ref{e:lambdahat}) is given by
\begin{eqnarray*}
\mse \widehat{\lambda(x_0)}  & = &  \left(
  \frac{1}{h^{d}}   \int_{b(x_0, h) \cap W} 
     \kappa^\gamma\left( \frac{x_0 - u}{h} \right)
     \lambda(u) du - \lambda(x_0)  \right)^2 \\
& + &
\frac{1}{n h^{2d}}
\int \int_{(b(x_0, h) \cap W)^{2}} 
   \kappa^\gamma\left( \frac{x_0 - u}{h} \right)
   \kappa^\gamma\left( \frac{x_0 - v}{h} \right)
   ( g(u,v) - 1 ) \lambda(u) \lambda(v) du dv \\
& + &
\frac{1}{nh^{2d}}
\int_{b(x_0, h) \cap W}    \kappa^\gamma\left( \frac{x_0 - u}{h} \right)^2
   \lambda(u) du.
\end{eqnarray*}
\end{prop}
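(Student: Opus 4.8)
The plan is to decompose the mean squared error in the usual way as
$\mse \widehat{\lambda(x_0)} = (\EE[\widehat{\lambda(x_0)}] - \lambda(x_0))^2 + \var(\widehat{\lambda(x_0)})$
and to evaluate each term by exploiting the independence and identical distribution of the $\Phi_i$ together with the moment formulas of Lemma~\ref{L:moments}.

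First I would handle the bias. By linearity of expectation and the fact that the $\Phi_i$ are identically distributed, $\EE[\widehat{\lambda(x_0)}] = \EE[\widehat{\lambda(x_0; h, \Phi_1, W)}]$, which by the first assertion of Lemma~\ref{L:moments} equals $h^{-d} \int_W \kappa^\gamma((x_0-u)/h)\, \lambda(u)\,du$. Since $\kappa^\gamma$ is supported on the closed unit ball $b(0,1)$, the integrand vanishes unless $|x_0 - u| \le h$, so the domain of integration reduces to $b(x_0,h) \cap W$. Subtracting $\lambda(x_0)$ and squaring yields the first term of the stated expansion.

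Next I would handle the variance. Because the $\Phi_i$ are independent, the variance of the average splits as $\var(\widehat{\lambda(x_0)}) = n^{-2} \sum_{i=1}^n \var(\widehat{\lambda(x_0; h, \Phi_i, W)})$, and identical distribution collapses this to $n^{-1}\, \var(\widehat{\lambda(x_0; h, \Phi_1, W)})$. The variance of a single kernel estimator is precisely the expression obtained from Lemma~\ref{L:moments} and displayed in terms of the pair correlation function $g$ immediately after it. Inserting this expression, and again restricting the integration domains to $b(x_0,h) \cap W$ (respectively its square) by the support property of $\kappa^\gamma$, produces the second and third terms, each carrying the factor $1/(n h^{2d})$.

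The argument is essentially bookkeeping: the only points requiring care are the correct $1/n$ scaling of the variance, which arises from the independence of the copies, and the systematic replacement of $W$ by $b(x_0,h) \cap W$ justified by the compact support of the Beta kernel. I do not expect any genuine analytic obstacle, since all substantive moment computations are already furnished by Lemma~\ref{L:moments}; the proposition is really a transcription of that lemma into the infill setting.
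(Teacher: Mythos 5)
Your proposal is correct and follows essentially the same route as the paper's proof: it decomposes the mean squared error into squared bias plus variance, uses independence and identical distribution to reduce to a single copy with the $1/n$ factor, invokes Lemma~\ref{L:moments} together with the pair-correlation rewriting (valid because $\lambda$ is assumed strictly positive), and restricts the integration domains to $b(x_0,h)\cap W$ via the compact support of the Beta kernel. There are no gaps.
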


The first term in the above expression is the squared bias. It depends on 
$\lambda$ and the bandwidth $h$ but not on $n$. The remaining terms come from 
the variance and depend on $\lambda$, on $g$, on $h$ and on $n$. 

Our aim in the remainder of this section is to derive an asymptotic expansion 
of the mean squared error for bandwidths $h_n$ that depend on $n$ in such a 
way that $h_n\to 0$ as $n\to\infty$. In order to achieve this, first recall
some basic facts from analysis. Let $E$ be an open subset of $\oR^n$ and denote 
by ${\cal{C}}^k(E)$ the class of functions $f: E \to \oR^m$ for which all 
$k^{\rm{th}}$ order partial derivatives $D_{j_1 \cdots j_k}f$ exist and are 
continuous on $E$. For such functions the order of taking partial derivatives 
may be interchanged and the Taylor theorem states that if $x \in E$ and 
$x+th\in E$ for all $0\leq t\leq 1$, then a $\theta \in (0,1)$ can be found 
such that 
\begin{equation}
\label{e:Taylor}
f(x+h) - f(x) = \sum_{r=1}^{k-1} \frac{1}{r!} D^rf(x) (h^{(r)}) +
\frac{1}{k!} D^kf(x+\theta h)(h^{(k)}),
\end{equation}
where $h^{(r)}$ is the $r$-tuple $(h, \dots, h)$ and
\[
D^r f(x) (h^{(r)}) := \sum_{j_1, \dots, j_r = 1}^n 
h_{j_1} \cdots h_{j_r} D_{j_1 \cdots j_r}f(x)
\]
for $h = (h_1, \dots, h_n) \in \oR^n$.

We are now ready to state the main result of this section, generalising
\cite[Theorem~2]{Lo17} for the union of independent random points.
The proof can be found in Section~\ref{S:proofs-nA}.

\begin{thm} \label{P:biasVar}
Let $\Phi_1, \Phi_2, \ldots$ be i.i.d.\ point processes observed in a
bounded open subset $\emptyset \neq W \subset \oR^d$ with
well-defined intensity function $\lambda$ and pair correlation function $g$.
Suppose that $g: W\times W \to \oR$ is bounded and that
$\lambda: W \to (0, \infty)$ is twice continuously differentiable 
with second order partial derivatives
$\lambda_{ij} = D_{ij}\lambda$, $i, j = 1, \dots, d$, that are
H\H{o}lder continuous with index $\alpha > 0$ on $W$, that is,
there exists some $C>0$ such that for all $i, j = 1, \dots, d$:
\[
| \lambda_{ij}(x) - \lambda_{ij}(y) | \leq C || x - y ||^\alpha,
\quad x, y \in W.
\]

Consider the estimator $\widehat \lambda$ based on the unions
$Y_n = \cup_{i\leq n} \Phi_i$, $n\in \oN$, and Beta kernel
$\kappa^\gamma$, $\gamma \geq 0$, with bandwidth $h_n$ chosen in such a
way that, as $n\to \infty$,  $h_n \to 0$ and $n h_n^{d} \to \infty$. 
Then, for $x_0 \in W$, as $n\to \infty$,
\begin{enumerate}
\item $\bias \widehat{\lambda(x_0)} =
 h_n^2 \frac{\sum_{i=1}^d \lambda_{ii}(x_0)}{2 (d + 2\gamma + 2) } +
 O(h_n^{2+\alpha})$.
\item $\var \widehat{\lambda(x_0)}= \frac{\lambda(x_0) Q(d,\gamma)}{nh_n^d}
+  O\left(\frac{1}{nh_n^{d-1}}\right)$.
\end{enumerate}
\end{thm}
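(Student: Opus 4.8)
The plan is to start from the exact bias and variance expressions furnished by Proposition~\ref{P:mse}---the quantity inside the square being the bias, the remaining two terms being the variance---and to extract their leading behaviour by a change of variables followed by a Taylor expansion of $\lambda$, evaluating the resulting integrals with the moment identities of Lemma~\ref{L:Beta}. A preliminary observation disposes of edge effects. Since $W$ is open and $x_0\in W$, there is a $\delta>0$ with $b(x_0,\delta)\subseteq W$; as $\kappa^\gamma$ is supported on the unit ball, the integrand in (\ref{e:kernel}), viewed as a function of $u$, is supported on $b(x_0,h)$. Hence once $h_n\leq\delta$ we have $b(x_0,h_n)\cap W=b(x_0,h_n)$, so every domain of integration becomes a full ball and, after substituting $u=x_0-h_n x$ (and $v=x_0-h_n y$), a full unit ball $b(0,1)$; because $h_n\to0$ this holds for all large $n$. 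I also record that, being $\mathcal{C}^2$ on $W$, both $\lambda$ and $\nabla\lambda$ are bounded on the compact ball $b(x_0,\delta)$.

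For the bias, the substitution above turns the first term of Proposition~\ref{P:mse} into $\int_{b(0,1)}\kappa^\gamma(x)\,\lambda(x_0-h_nx)\,dx-\lambda(x_0)$. I would then apply the Taylor expansion (\ref{e:Taylor}) with $k=2$ and increment $-h_n x$, writing $\lambda(x_0-h_nx)-\lambda(x_0)$ as the first-order term $-h_n\sum_i x_i\lambda_i(x_0)$ plus the second-order Lagrange term $\tfrac{h_n^2}{2}\sum_{i,j}x_ix_j\lambda_{ij}(x_0-\theta_x h_nx)$ for some $\theta_x\in(0,1)$. Splitting the latter as $\tfrac{h_n^2}{2}\sum_{i,j}x_ix_j\lambda_{ij}(x_0)$ plus a defect, integration against $\kappa^\gamma$ proceeds term by term: the constant term reproduces $\lambda(x_0)$ since $\kappa^\gamma$ is a density; the first-order term vanishes because $\int x_i\kappa^\gamma=0$ by Lemma~\ref{L:Beta}; the off-diagonal second-order contributions vanish because $\int x_ix_j\kappa^\gamma=0$ for $i\neq j$; and the diagonal ones give $\tfrac{h_n^2}{2}\sum_i\lambda_{ii}(x_0)\,V(d,\gamma)=h_n^2\sum_i\lambda_{ii}(x_0)/(2(d+2\gamma+2))$. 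The defect is controlled by the H\"older hypothesis: since $|\lambda_{ij}(x_0-\theta_x h_nx)-\lambda_{ij}(x_0)|\leq C\,\|\theta_x h_nx\|^\alpha\leq Ch_n^\alpha$ uniformly on $b(0,1)$ and $\int|x_ix_j|\kappa^\gamma<\infty$, it contributes $O(h_n^{2+\alpha})$, yielding claim~1.

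For the variance I would treat the two remaining terms of Proposition~\ref{P:mse} separately. In the diagonal term the substitution $u=x_0-h_nx$ gives $\tfrac{1}{nh_n^d}\int_{b(0,1)}\kappa^\gamma(x)^2\lambda(x_0-h_nx)\,dx$; replacing $\lambda(x_0-h_nx)$ by $\lambda(x_0)$ produces the main term $\lambda(x_0)Q(d,\gamma)/(nh_n^d)$ by the definition of $Q(d,\gamma)$ in Lemma~\ref{L:Beta}, while the Lipschitz bound $|\lambda(x_0-h_nx)-\lambda(x_0)|\leq Lh_n$ on the support shows the correction to be $O(1/(nh_n^{d-1}))$. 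After substituting $u=x_0-h_nx$, $v=x_0-h_ny$, the covariance (pair-correlation) term equals $\tfrac1n\int_{b(0,1)^2}\kappa^\gamma(x)\kappa^\gamma(y)\,(g(x_0-h_nx,x_0-h_ny)-1)\,\lambda(x_0-h_nx)\lambda(x_0-h_ny)\,dx\,dy$; bounding $|g-1|$ by $\sup|g|+1<\infty$ and $\lambda$ by its supremum over $b(x_0,\delta)$ makes this $O(1/n)$. Since $h_n\leq1$ eventually gives $1/n\leq1/(nh_n^{d-1})$, both corrections are $O(1/(nh_n^{d-1}))$, establishing claim~2.

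The genuinely new ingredient compared with the independent (Poisson) setting, in which the covariance term is absent because $g\equiv1$, is precisely the control of that term, and this is where boundedness of $g$ is indispensable. Technically, however, the step demanding most care is the H\"older estimate of the bias defect: the Lagrange point $\theta_x$ depends on $x$ (and on $h_n$), so one must argue that the bound is uniform over the unit ball in order to reach the sharp order $O(h_n^{2+\alpha})$ rather than a mere $o(h_n^2)$.
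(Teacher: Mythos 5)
Your proposal is correct and follows essentially the same route as the paper's proof: reduce to full-ball integrals for large $n$, change variables, Taylor-expand $\lambda$ to second order with the H\"older hypothesis giving a uniform $O(h_n^{2+\alpha})$ control of the Lagrange defect, evaluate moments via Lemma~\ref{L:Beta}, and handle the variance by a first-order (Lipschitz) bound on the diagonal term plus boundedness of $g$ to show the covariance term is $O(1/n)$, hence absorbed into $O(1/(nh_n^{d-1}))$. Even the detail you flag as delicate --- uniformity of the bound over the Lagrange points $\theta_x$ --- is treated exactly as in the paper.
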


The bias depends on the second order partial derivatives of the
unknown intensity function and on the smoothness parameter $\alpha$.
The smoothness of the kernel, measured by $\gamma$, also plays a role.
The leading term of the variance depends on $\lambda(x_0)$ and on the 
smoothness of the kernel.

Theorem~\ref{P:biasVar} readily yields the asymptotically
optimal bandwidth, cf.\ Section~\ref{S:proofs-nA}.

\begin{cor} \label{C:optimal}
Consider the setting of Theorem~\ref{P:biasVar}. Then 
\[
  \mse \widehat{\lambda(x_0)}  =
  h_n^4 \frac{V(d,\gamma)^2}{4} \left( \sum_{i=1}^d \lambda_{ii}(x_0) \right)^2 +
   \frac{ \lambda(x_0) Q(d, \gamma) } {nh_n^d} +
 +   O\left(h_n^{4+\alpha} \right) + O\left(\frac{1}{n h_n^{d-1}} \right).
\]
The asymptotic mean squared error is optimised at
\[
  h_n^*(x_0) = \frac{1}{n^{1/(d+4)}} \left(
    \frac{d \lambda(x_0) Q(d,\gamma)}{ V(d,\gamma)^2
      \left(  \sum_{i=1}^d \lambda_{ii}(x_0)\right)^2}
  \right)^{1/(d+4)}.
\]
\end{cor}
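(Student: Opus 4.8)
The plan is to assemble the mean squared error from its two ingredients via $\mse \widehat{\lambda(x_0)} = \left( \bias \widehat{\lambda(x_0)} \right)^2 + \var \widehat{\lambda(x_0)}$, using the expansions already furnished by Theorem~\ref{P:biasVar}, and then to minimise the resulting leading-order expression over $h_n$ by elementary calculus. First I would rewrite the bias by invoking the identity $1/(d+2\gamma+2) = V(d,\gamma)$ from Lemma~\ref{L:Beta}, so that the bias constant $\tfrac{1}{2(d+2\gamma+2)}$ becomes $\tfrac{1}{2} V(d,\gamma)$, giving
\[
\bias \widehat{\lambda(x_0)} = h_n^2 \, \frac{V(d,\gamma)}{2} \sum_{i=1}^d \lambda_{ii}(x_0) + O\left(h_n^{2+\alpha}\right).
\]
Squaring this, the leading term is $h_n^4 \tfrac{V(d,\gamma)^2}{4} \big( \sum_i \lambda_{ii}(x_0) \big)^2$; the cross term between the leading bias and the $O(h_n^{2+\alpha})$ remainder is $O(h_n^{4+\alpha})$, while the square of the remainder is $O(h_n^{4+2\alpha})$ and, since $\alpha > 0$, is absorbed into the $O(h_n^{4+\alpha})$ term. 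Adding the variance expansion $\var \widehat{\lambda(x_0)} = \lambda(x_0) Q(d,\gamma)/(n h_n^d) + O(1/(n h_n^{d-1}))$ then yields exactly the claimed four-term expression for $\mse \widehat{\lambda(x_0)}$.

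To locate the optimal bandwidth I would discard the two remainder terms and minimise the leading-order asymptotic mean squared error
\[
M(h) = A\, h^4 + \frac{B}{n\, h^d}, \qquad A = \frac{V(d,\gamma)^2}{4} \Big( \sum_{i=1}^d \lambda_{ii}(x_0) \Big)^2, \quad B = \lambda(x_0)\, Q(d,\gamma),
\]
over $h > 0$. Since the squared-bias term increases while the variance term decreases in $h$, and $M''(h) = 12 A h^2 + d(d+1) B n^{-1} h^{-d-2} > 0$, the function $M$ is strictly convex on $(0,\infty)$ with a unique stationary point. Solving $M'(h) = 4 A h^3 - d B/(n h^{d+1}) = 0$ gives $h^{d+4} = dB/(4An)$, whence
\[
h_n^*(x_0) = \Big( \frac{dB}{4An} \Big)^{1/(d+4)} = \frac{1}{n^{1/(d+4)}} \Big( \frac{d\,\lambda(x_0)\, Q(d,\gamma)}{V(d,\gamma)^2 \big( \sum_i \lambda_{ii}(x_0) \big)^2} \Big)^{1/(d+4)},
\]
after substituting $A$ and $B$ and cancelling the factor $4$; this is the stated formula.

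The computation itself is routine, so I do not expect a genuine obstacle; the one point I would verify explicitly is the \emph{legitimacy} of optimising the leading terms alone. At $h_n^* \asymp n^{-1/(d+4)}$ both leading terms are of order $n^{-4/(d+4)}$, whereas the remainders $O(h_n^{4+\alpha})$ and $O(1/(n h_n^{d-1}))$ are of orders $n^{-(4+\alpha)/(d+4)}$ and $n^{-5/(d+4)}$ respectively, each strictly smaller than $n^{-4/(d+4)}$. Hence the neglected contributions do not perturb the leading-order minimiser, which justifies working with $M$ alone. I would also flag the tacit nondegeneracy requirement $\sum_i \lambda_{ii}(x_0) \neq 0$: where the curvature of $\lambda$ vanishes the leading squared-bias term disappears, $A = 0$, and the argument breaks down, so the expression for $h_n^*(x_0)$ is to be understood at points $x_0$ with $\sum_i \lambda_{ii}(x_0) \neq 0$, a higher-order expansion being needed otherwise.
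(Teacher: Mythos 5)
Your proposal is correct and follows essentially the same route as the paper: square the bias expansion from Theorem~\ref{P:biasVar} (using $V(d,\gamma)=1/(d+2\gamma+2)$ to absorb the cross and remainder terms into $O(h_n^{4+\alpha})$), add the variance expansion, and minimise $Ah_n^4 + B/(nh_n^d)$ by elementary calculus with the same first- and second-derivative computation. Your two extra checks --- verifying that the neglected remainders are of strictly smaller order at $h_n^* \asymp n^{-1/(d+4)}$, and flagging the tacit requirement $\sum_{i=1}^d \lambda_{ii}(x_0) \neq 0$ --- are sensible refinements the paper leaves implicit, but they do not change the argument.
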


In words, $h_n^*(x_0)$ is of the order $n^{-1/(d+4)}$. Clearly $h_n^*(x_0)$ tends
to zero as $n\to \infty$. Moreover, $n (h_n^*)^d$ is of the order $n$ to the $1 - d/(d+4)$
and therefore tends to infinity with $n$. For the special case $d=2$,
\[
  h_n^*(x_0) = \frac{1}{n^{1/6}} \left(
   \frac{ 8 \lambda(x_0) (\gamma + 1)^2 (\gamma+2)^2}{(2\gamma+1) \pi
   (\lambda_{11}(x_0) + \lambda_{22}(x_0))^2}
    \right)^{1/6}.
\]

The following Proposition generalises \cite[Proposition~5]{Lo17}.
Its proof can be found in Section~\ref{S:proofs-nA}.

\begin{prop}  \label{P:bigO}
Let $\Phi_1, \Phi_2, \ldots$ be i.i.d.\ point processes observed in 
a bounded  open subset $\emptyset \neq W \subset \oR^d$ with
well-defined intensity function $\lambda$ and pair correlation 
function $g$. Suppose that $g:W\times W \to \oR$ is bounded and that
$\lambda: W \to (0, \infty)$ is twice continuously differentiable
with second order partial derivatives
$\lambda_{ij} = D_{ij}\lambda$, $i, j = 1, \dots, d$, that are
H\H{o}lder continuous with index $\alpha > 0$ on $W$. 
Consider $\widehat \lambda$ based on the unions
$Y_n = \cup_{i\leq n} \Phi_i$, $n\in \oN$, and Beta kernel
$\kappa^\gamma$, $\gamma \geq 0$, with bandwidth $h_n$ chosen in
such a way that as $n\to \infty$, $h_n \to 0$ and
$n h_n^{d} \to \infty$. Then, for $x_0\in W$, as $n\to \infty$,
\[
  \widehat {\lambda(x_0)} 
  = \lambda(x_0) + h_n^2 \frac{\sum_{i=1}^d \lambda_{ii}(x_0)}{2(d+2\gamma + 2)}
 +  O(h_n^{2+\alpha})
  + \sqrt{ \lambda(x_0) Q(d,\gamma) } O_P\left( n^{-1/2}h_n^{-d/2} \right).
\]
\end{prop}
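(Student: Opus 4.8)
The plan is to split the estimator into its expectation and a centred fluctuation,
\[
\widehat{\lambda(x_0)} = \EE\!\left[ \widehat{\lambda(x_0)} \right] + \left( \widehat{\lambda(x_0)} - \EE\!\left[ \widehat{\lambda(x_0)} \right] \right),
\]
and to treat the two pieces by appealing directly to the two parts of Theorem~\ref{P:biasVar}. First I would observe that the deterministic term is nothing but $\lambda(x_0) + \bias \widehat{\lambda(x_0)}$, so that part~1 of Theorem~\ref{P:biasVar} immediately supplies
\[
\EE\!\left[ \widehat{\lambda(x_0)} \right] = \lambda(x_0) + h_n^2 \frac{\sum_{i=1}^d \lambda_{ii}(x_0)}{2(d+2\gamma+2)} + O\!\left( h_n^{2+\alpha} \right),
\]
which is exactly the first three terms of the claimed expansion. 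It remains only to identify the stochastic order of the fluctuation $Z_n := \widehat{\lambda(x_0)} - \EE[\widehat{\lambda(x_0)}]$.

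To this end I would invoke Chebyshev's inequality together with the variance expansion in part~2 of Theorem~\ref{P:biasVar}. Since $Z_n$ has mean zero, its variance equals $\var \widehat{\lambda(x_0)}$, and part~2 gives
\[
\var(Z_n) = \frac{\lambda(x_0) Q(d,\gamma)}{n h_n^d} + O\!\left( \frac{1}{n h_n^{d-1}} \right) = \lambda(x_0) Q(d,\gamma)\, \frac{1}{n h_n^d}\left( 1 + O(h_n) \right),
\]
where the second equality uses $\tfrac{1}{n h_n^{d-1}} = h_n \cdot \tfrac{1}{n h_n^d}$. Because $h_n \to 0$, the bracketed factor is bounded, so for any $\varepsilon > 0$ Chebyshev's inequality yields a constant $M$ with $\PP\!\left( |Z_n| > M \sqrt{\lambda(x_0) Q(d,\gamma)}\, n^{-1/2} h_n^{-d/2} \right) \le \varepsilon$ for all large $n$. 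This is precisely the assertion $Z_n = \sqrt{\lambda(x_0) Q(d,\gamma)}\, O_P\!\left( n^{-1/2} h_n^{-d/2} \right)$; since $\lambda(x_0) > 0$ and $Q(d,\gamma)$ is finite and positive, the constant prefactor may be pulled outside the $O_P$ symbol freely. Adding this fluctuation to the deterministic expansion above completes the proof.

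I do not expect a genuine obstacle here: the heavy analytical work, namely the Taylor expansion producing the bias and the double-integral computation producing the variance, has already been carried out in Theorem~\ref{P:biasVar}. The only care required is bookkeeping with the stochastic-order notation, in particular checking that the sub-leading variance term $O(1/(n h_n^{d-1}))$ does not affect the rate: its square root is a factor $h_n^{1/2}$ smaller than the leading standard deviation and is therefore absorbed into the leading $O_P$ term. One should also note that $Z_n$ is an average of $n$ i.i.d.\ contributions, so its mean genuinely vanishes and Chebyshev applies without further conditions beyond the finiteness of the second moment, which is guaranteed by the hypotheses through part~2 of Theorem~\ref{P:biasVar}.
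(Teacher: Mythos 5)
Your proposal is correct and takes essentially the same route as the paper: it too decomposes $\widehat{\lambda(x_0)}$ into its expectation (handled by part~1 of Theorem~\ref{P:biasVar}) plus the mean-zero fluctuation, which it bounds via Chebyshev's inequality using the variance expansion of part~2. Your bookkeeping point that the remainder $O(1/(nh_n^{d-1}))$ is a factor $h_n$ smaller than the leading variance term, and hence absorbed into the leading $O_P$ rate, is exactly how the paper disposes of its remainder $R(h_n)$.
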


\section{Adaptive infill asymptotics}
\label{S:adaptive}

Up to now, estimators based on (\ref{e:kernel}) were considered
in which the same bandwidth $h$ was applied at every point 
$y \in \Phi \cap W$. However, at least intuitively, it seems
clear that the bandwidth should be smaller in regions with
many points, larger when points are scarce. This suggests
that $h = h(y)$ should be decreasing in $\lambda(y)$. 

Motivated by similar considerations in the context of density 
estimation, Abramson \cite{Abra82a} suggested to consider 
point-dependent bandwidths of the form
\(
h(y) =  h / c(y)
\)
for $c(y)$ equal to the square root of the probability density
function. He found that a significant reduction in bias could be
obtained by the use of such adaptive bandwidths. Our aim in this
section is to show that a similar result holds for spatial 
intensity function estimation.

Define an estimator 
\[
{\tilde \lambda(x_0)}   
 = \frac{1}{n} \sum_{i=1}^n
  \widehat{ \tilde \lambda( x_0; h, \Phi_i, W ) }
\]
of $\lambda(x_0)$, $x_0 \in W$, that is the average of
data-adaptive estimators 
\begin{equation} \label{e:Abramson}
  { \tilde \lambda( x_0; h, \Phi_i, W ) } =
  \sum_{y\in \Phi_i}  \frac{c(y)^d}{h^d} \kappa\left(
    \frac{x_0-y}{h} c(y) \right).
\end{equation}
As in Section~\ref{S:local}, $\kappa$ is a kernel and the $\Phi_i$,
$i=1, \dots, n$, are independent and identically distributed point processes
on $\oR^d$ observed in a bounded non-empty open subset $W$ for which the
first order moment measure exists and admits an intensity function 
$\lambda: W \to [0,\infty)$; $c: W \to (0,\infty)$ is assumed to be a 
measurable positive-valued weight function on $W$.

The next result summarises the first two moments.

\begin{res} \label{L:momentsA}
Let $\Phi$ be a point process observed in a bounded open subset
$\emptyset \neq W \subset \oR^d$, whose factorial moment measures
exist up to second   order and are absolutely continuous with intensity
function $\lambda$ and second order product densities $\rho^{(2)}$. Let
$\kappa$ be a kernel. Then the first two moments of (\ref{e:Abramson}) are
\[
\EE \tilde \lambda(x_0) =
\frac{1}{h^d} \int_{W} c(u)^d
\kappa\left( \frac{x_0 - u}{h} c(u) \right) \lambda(u) du
\]
and
\begin{eqnarray*}
  \EE \left[ \left( {\tilde \lambda(x_0;h,\Phi_1, W)} \right)^2 \right]
  & = &
  \frac{1}{h^{2d}} \int_{W^2} c(u)^d c(v)^d
    \kappa\left( \frac{x_0-u}{h} c(u) \right)
    \kappa\left( \frac{x_0-v}{h} c(v) \right) \rho^{2}(u,v) du dv
  \\
  & + &        \frac{1}{ h^{2d}} \int_W c(u)^{2d}
                  \kappa\left( \frac{x_0-u}{h} c(u) \right)^2 \lambda(u) du.
\end{eqnarray*}
\end{res}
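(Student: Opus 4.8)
The plan is to mirror the argument behind Lemma~\ref{L:moments}, simply replacing the fixed kernel profile by its Abramson-rescaled counterpart and invoking the defining properties of the first- and second-order factorial moment measures. Throughout I would abbreviate the summand in (\ref{e:Abramson}) by
\[
f(y) = \frac{c(y)^d}{h^d} \, \kappa\left( \frac{x_0 - y}{h} \, c(y) \right),
\]
so that $\tilde\lambda(x_0; h, \Phi, W) = \sum_{y \in \Phi \cap W} f(y)$. Because $\Phi$ is observed in $W$, every sum ranges only over points lying in $W$, which is why each integral below is taken over $W$ rather than over $\oR^d$.

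For the first moment I would apply the Campbell formula associated with the first-order moment measure. Since the latter is absolutely continuous with density $\lambda$, this gives $\EE \sum_{y \in \Phi \cap W} f(y) = \int_W f(u) \, \lambda(u) \, du$, and substituting the definition of $f$ yields exactly the claimed expression for $\EE \tilde\lambda(x_0)$.

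For the second moment I would first expand the square and split the resulting double sum into its diagonal and off-diagonal parts,
\[
\left( \sum_{y \in \Phi \cap W} f(y) \right)^2 = \sum_{y \in \Phi \cap W} f(y)^2 + \sum_{\substack{y, z \in \Phi \cap W \\ y \neq z}} f(y) \, f(z).
\]
Taking expectations term by term, the diagonal contribution is again handled by the first-order Campbell formula and produces $\int_W f(u)^2 \, \lambda(u) \, du$, which is precisely the second displayed integral in the statement. The off-diagonal sum over distinct ordered pairs is governed by the second-order factorial moment measure; as this is absolutely continuous with density $\rho^{(2)}$, its expectation equals $\int_{W^2} f(u) \, f(v) \, \rho^{(2)}(u,v) \, du \, dv$, and inserting $f$ reproduces the first displayed integral. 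Summing the two contributions completes the computation.

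The only point requiring care, which is the same subtlety already present in Lemma~\ref{L:moments}, is the bookkeeping between the two sums: the second-order factorial moment measure by definition excludes coincident points, so the diagonal $y = z$ must be accounted for separately through the intensity $\lambda$ rather than through $\rho^{(2)}$. I do not anticipate any analytic obstacle beyond this, as no Taylor expansions or limiting arguments enter; the identities follow directly from the defining properties of the product densities, as recorded for instance in \cite[Section~4.3.3]{SKM}.
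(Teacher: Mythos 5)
Your argument is correct and is precisely the standard computation the paper itself invokes when it says the result ``follows directly from the definition of product densities'' with a citation to \cite[Section~4.3.3]{SKM}: the Campbell formula for the first moment, and the diagonal/off-diagonal split of the squared sum, with the off-diagonal pairs handled by the second-order factorial moment measure (density $\rho^{(2)}$) and the diagonal handled by the intensity $\lambda$. Your closing remark about the factorial measure excluding coincident pairs is exactly the one subtlety involved, so nothing is missing.
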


The proof follows directly from the definition of product densities,
see for example \cite[Section~4.3.3]{SKM}.
For the special case $c(u) \equiv 1$, we retrieve Lemma~\ref{L:moments}.

Provided $\lambda(\cdot) > 0$, the variance of $\tilde \lambda(x_0)$,
the average of the $\tilde \lambda(x_0; h, \Phi_i, W)$,
can be expressed in terms of the pair correlation function as
\[
  \var {\tilde \lambda(x_0)}   =
  \frac{1}{nh^{2d}} \int_W c(u)^{2d}
      \kappa\left( \frac{x_0 - u}{h} c(u) \right)^2 \lambda(u) du +
\]
\begin{equation}\label{e:varA}
\frac{1}{n h^{2d}}
\int_W \int_W c(u)^d c(v)^d
   \kappa\left( \frac{x_0 - u}{h} c(u) \right)
   \kappa\left( \frac{x_0 - v}{h} c(v) \right)
   ( g(u,v) - 1 ) \lambda(u) \lambda(v) du dv.
\end{equation}

We are now ready to state the first main result of this section in analogy to
\cite[Theorem, p.~1218]{Abra82a}. The proof can be found in Section~\ref{S:proofs-A}.

\begin{thm}  \label{P:biasVarA}
Let $\Phi_1, \Phi_2, \ldots$ be i.i.d.\ point processes observed in a
bounded open subset $\emptyset \neq W \subset \oR^d$ with well-defined
intensity function $\lambda$ and pair correlation function $g$. Suppose that 
$g: W\times W \to \oR$ is bounded and that 
$\lambda: W \to (\underline \lambda, \bar \lambda)$ is bounded, bounded away 
from zero and twice continuously differentiable on $W$ with bounded second 
order partial derivatives $\lambda_{ij} = D_{ij}\lambda$, $i, j = 1, \dots, d$. 

Consider the estimator $\tilde \lambda$ with
\[
  c(x) =  \sqrt{ \frac{ \lambda(x) }{\lambda(x_0) } }
\]
based on the unions $Y_n = \cup_{i\leq n} \Phi_i$, $n\in \oN$, and Beta kernel
$\kappa^\gamma$, $\gamma > 2$, with bandwidth $h_n$ chosen in such a way that,
as $n\to \infty$, $h_n \to 0$ and $n h_n^{d} \to \infty$.
Then, for $x_0 \in W$, as $n\to \infty$,
\begin{enumerate}
\item $\bias \tilde \lambda(x_0) = o(h_n^2)$.
\item $\var \tilde \lambda(x_0) =  \frac{\lambda(x_0) Q(d,\gamma)}{nh_n^d}
  + O\left( \frac{1}{nh_n^{d-1}}\right)$.
\end{enumerate}
\end{thm}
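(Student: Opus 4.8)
The plan is to start from the exact moment formulas in Lemma 3 with the specific choice $c(x) = \sqrt{\lambda(x)/\lambda(x_0)}$, and to reduce both the bias and variance analyses to Taylor expansions around $x_0$ that are controlled by the smoothness hypotheses on $\lambda$. For the bias, I would begin with
\[
\bias \tilde\lambda(x_0) = \frac{1}{h^d}\int_W c(u)^d \kappa^\gamma\!\left(\frac{x_0-u}{h}c(u)\right)\lambda(u)\,du - \lambda(x_0),
\]
and perform the substitution $u = x_0 - hz/c(x_0)$ (or an equivalent local change of variables) to rescale the integral onto the support of the kernel. The crucial structural feature, which is the whole point of the Abramson principle, is that the argument $\frac{x_0-u}{h}c(u)$ couples the displacement with the local weight, so that expanding $c(u)^d$, $\lambda(u)$, and the kernel argument jointly in powers of $h$ produces a cancellation of the usual leading $h^2$ bias term. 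Concretely, after expanding $c(u) = c(x_0) + \nabla c(x_0)^T(u-x_0) + \cdots$ using Taylor's theorem (\ref{e:Taylor}), the second-order contribution that would ordinarily give $\tfrac{h^2}{2(d+2\gamma+2)}\sum_i \lambda_{ii}(x_0)$ must be shown to vanish identically because of the specific proportionality $c \propto \sqrt{\lambda}$; this is the step that upgrades the bias from $O(h_n^2)$ to $o(h_n^2)$.

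The main obstacle, and where I would spend the most care, is verifying this cancellation rigorously rather than formally. It requires tracking all the order-$h^2$ terms arising from three sources simultaneously: the curvature of $\lambda$ (giving $\lambda_{ij}$), the Jacobian factor $c(u)^d$, and the quadratic terms in the expansion of the kernel argument. I would use the vanishing odd moments and the explicit even moments $V(d,\gamma) = 1/(d+2\gamma+2)$, $V_2$, $V_4$ from Lemma 2 to evaluate the resulting integrals term by term, and then check that the coefficient of $h^2$ collapses to zero. The condition $\gamma > 2$ is presumably needed here to ensure enough kernel smoothness (three continuous derivatives, by the remark after (\ref{e:beta})) so that the Taylor remainder from the kernel itself is genuinely $o(h^2)$ and the fourth-moment quantities $V_4, V_2$ are finite and control the remainder; the boundedness of the $\lambda_{ij}$ and the fact that $\lambda$ is bounded away from zero and infinity guarantee that $c$ and its derivatives are themselves well-behaved on $W$, so all the constants in the expansion are finite.

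For the variance, the argument is more routine and parallels the nonadaptive Theorem 1. Starting from (\ref{e:varA}), I would handle the two integrals separately. The diagonal term
\[
\frac{1}{nh^{2d}}\int_W c(u)^{2d}\,\kappa^\gamma\!\left(\frac{x_0-u}{h}c(u)\right)^2 \lambda(u)\,du
\]
dominates: after the same local substitution, the leading order is $\frac{1}{nh^d}\lambda(x_0)\int \kappa^\gamma(z)^2\,dz = \frac{\lambda(x_0)Q(d,\gamma)}{nh^d}$, using $Q(d,\gamma)$ from Lemma 2 together with $c(x_0)=1$, and the correction from expanding $c(u)^{2d}\lambda(u)$ to first order contributes $O(1/(nh^{d-1}))$. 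The off-diagonal term involving $g(u,v)-1$ is controlled by the boundedness of $g$ and of $\lambda$: since both kernel factors localise $u$ and $v$ to a ball of radius $O(h)$, this double integral is $O(h^{2d}/(nh^{2d})) = O(1/n)$, which is negligible compared to the stated error. Collecting these gives claim~(2). Throughout, the boundedness of $W$, $g$, and $\lambda$ on the compact support induced by the Beta kernel justifies interchanging limits and expansions, so no edge-correction issues arise for small $h_n$.
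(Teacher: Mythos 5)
Your overall route---localise by a change of variables, expand everything jointly to order $h_n^2$, show the $h_n^2$ coefficient cancels structurally, and bound the pair-correlation term crudely by $O(1/n)$---is the paper's route, and your variance argument matches the paper's essentially step for step (the paper writes the squared-kernel integrand as $h_u(c(x_0+h_nu))$ with $h_u(v)=v^{2d+2}\kappa^\gamma(uv)^2$ and does a first-order Taylor expansion, which is your ``expand $c(u)^{2d}\lambda(u)$ to first order'' made precise). The shortfall is in the one step you yourself flag as the crux. You assert the $h_n^2$ cancellation and propose to verify it using only the vanishing odd moments and the even moments $V(d,\gamma)$, $V_2(d,\gamma)$, $V_4(d,\gamma)$ of Lemma~\ref{L:Beta}, but those are not sufficient: expanding $\kappa^\gamma\bigl(u\,c(x_0+h_nu)\bigr)$ in $h_n$ produces integrals of quadratic monomials against first and second \emph{derivatives} of the kernel, such as $\int u_iu_j\sum_k u_kD_k\kappa^\gamma(u)\,du$ and $\int u_iu_j\sum_{k,l}u_ku_lD_{kl}\kappa^\gamma(u)\,du$. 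The cancellation rests on the integration-by-parts identities of Lemmas~\ref{L:PI} and~\ref{L:PIhigh}, namely $\int u_i^2\sum_k u_kD_k\kappa^\gamma(u)\,du=-(d+2)V(d,\gamma)$ and $\int u_i^2\sum_{k,l}u_ku_lD_{kl}\kappa^\gamma(u)\,du=(d+2)(d+3)V(d,\gamma)$, which make the $h_n^2$ coefficient proportional to $(d+2)-(d+2)=0$ for the $D^2c$ part and to $(d+1)(d+2)-2(d+2)^2+(d+2)(d+3)=0$ for the $(Dc)^2$ part. These identities require the boundary evaluations of $u_k(1-\|u\|^2)^{\gamma-1}$ on the unit sphere to vanish, which is a second, equally essential role of the hypothesis $\gamma>2$. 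Your stated reason for $\gamma>2$ is also off: by the remark after (\ref{e:beta}) it yields \emph{two}, not three, continuous derivatives of $\kappa^\gamma$, and two suffice, since a $k=2$ Taylor expansion with remainder $\frac{v^2}{2}D^2g_u(1+\theta v)$, bounded uniformly on compacta, combined with dominated convergence gives $o(h_n^2)$; neither a third kernel derivative nor $V_4$ is needed at this order.

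One organisational device from the paper is worth adopting, because it turns your ``three simultaneous sources of $h^2$ terms'' into tractable bookkeeping: since $c(x)=\sqrt{\lambda(x)/\lambda(x_0)}$, you have $\lambda(x_0+h_nu)=\lambda(x_0)\,c(x_0+h_nu)^2$, so after the substitution the bias is exactly $\lambda(x_0)\int_{\oR^d}\left[g_u(c(x_0+h_nu))-g_u(1)\right]du$ with $g_u(v)=v^{d+2}\kappa^\gamma(vu)$, and your separate expansions of $c(u)^d$, of $\lambda(u)$ and of the kernel argument collapse into a one-dimensional expansion of $g_u$ at $v=1$ composed with an expansion of $c$ at $x_0$ (with remainders controlled because $\lambda$ is bounded, bounded away from zero, and has bounded second-order partials, and because compact support of the integrand follows from $\underline\lambda>0$). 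This reformulation also clarifies the mechanism: the vanishing of the $h_n^2$ coefficient is a property of the kernel identities alone and holds for any sufficiently smooth $c$ with $c(x_0)=1$; the square-root choice enters only through the absorption $\lambda\,du=\lambda(x_0)c^2\,du$---consistent with your intuition, but not quite the ``$\lambda$-curvature cancels against $c$-terms'' picture your sketch suggests.
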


In comparison with Theorem~\ref{P:biasVar}, the variance is the same as
that for a non-adaptive bandwidth. The bias term on the other hand is of
a smaller order. Note that, since the leading bias term is not specified,
Theorem~\ref{P:biasVarA} cannot be used to calculate an asymptotically
optimal bandwidth. To remedy this, stronger smoothness assumptions seem
needed.

\begin{thm}  \label{P:biasVarHall}
Let $\Phi_1, \Phi_2, \ldots$ be i.i.d.\ point processes observed in a
bounded open subset $\emptyset \neq W \subset \oR^d$ with well-defined
intensity function $\lambda$ and pair correlation function $g$. Suppose that
$g: W\times W \to \oR$ is bounded and that
$\lambda: W \to (\underline \lambda, \bar \lambda)$ is bounded, bounded away 
from zero and five times continuously differentiable on $W$ with bounded 
partial derivatives.

Consider the estimator $\tilde \lambda$ with
\[
  c(x) =  \sqrt{ \frac{ \lambda(x) }{\lambda(x_0) } }
\]
based on the unions $Y_n = \cup_{i\leq n} \Phi_i$, $n\in \oN$, and Beta kernel
$\kappa^\gamma$, $\gamma > 5$, with bandwidth $h_n$ chosen in such a way that,
as $n\to \infty$, $h_n \to 0$ and $n h_n^{d} \to \infty$.
Then, for $x_0 \in W$, as $n\to \infty$,
\begin{enumerate}
\item $\bias \tilde \lambda(x_0) = \lambda(x_0) h_n^4 \int_{\oR^d} A(u; x_0)du + o(h_n^4)$,
where 
\begin{eqnarray*}
A(u; x_0) & = &
  \frac{Dg_u(1)}{24} D^4 c(x_0)(u,u,u,u) + \frac{ D^4g_u(1)}{24} ( Dc(x_0)u)^4\\
& + &
  \frac{D^2g_u(1)}{2} \left\{ \frac{1}{3} D c(x_0) u \,  D^3 c(x_0) (u,u,u)
  + \frac{1}{4} (D^2c(x_0)(u,u))^2 \right\} \\
& + &
  \frac{D^3g_u(1)}{4}  (D c(x_0)u)^2 D^2c(x_0)(u,u) 
\end{eqnarray*}
and $g_u(v) = v^{d+2} \kappa^\gamma(vu)$.
\item $\var \tilde \lambda(x_0) =  \frac{\lambda(x_0) Q(d,\gamma)}{nh_n^d}
  + O\left( \frac{1}{nh_n^{d-1}}\right)$.
\end{enumerate}
\end{thm}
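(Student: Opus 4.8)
The plan is to treat the two claims separately. For item~2, I would note that the variance formula (\ref{e:varA}) for $\var\tilde\lambda(x_0)$ is identical to the one analysed in Theorem~\ref{P:biasVarA}, and the present hypotheses are strictly stronger than those used there; hence the leading term $\lambda(x_0)Q(d,\gamma)/(nh_n^d)$ and the $O(1/(nh_n^{d-1}))$ remainder follow verbatim from that proof. The real work is to sharpen the bias from $o(h_n^2)$ to an explicit $h_n^4$ expansion. Starting from the mean in Lemma~\ref{L:momentsA} and substituting $y=x_0-hu$ turns
\[
\EE\tilde\lambda(x_0)=\frac{1}{h^d}\int_W c(y)^d\kappa^\gamma\!\left(\frac{x_0-y}{h}c(y)\right)\lambda(y)\,dy
\]
into $\int c(x_0-hu)^d\kappa^\gamma(u\,c(x_0-hu))\lambda(x_0-hu)\,du$. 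The Abramson choice $c(x)=\sqrt{\lambda(x)/\lambda(x_0)}$ now gives $c(x_0)=1$ and $\lambda(x_0-hu)=\lambda(x_0)\,c(x_0-hu)^2$, collapsing the integrand to
\[
\EE\tilde\lambda(x_0)=\lambda(x_0)\int_{\oR^d} g_u\!\left(c(x_0-hu)\right)du,\qquad g_u(v)=v^{d+2}\kappa^\gamma(vu),
\]
where the integral may be extended to $\oR^d$ since for small $h$ the compact support of $\kappa^\gamma$ confines $u$ to a fixed bounded ball, for which $x_0-hu\in W$. As $g_u(1)=\kappa^\gamma(u)$ integrates to $1$, the constant term reproduces $\lambda(x_0)$ and cancels, leaving $\bias\tilde\lambda(x_0)=\lambda(x_0)\int_{\oR^d}\bigl[g_u(c(x_0-hu))-g_u(1)\bigr]du$.

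Next I would perform a double Taylor expansion. Using (\ref{e:Taylor}) for the $C^5$ function $c$ I write $c(x_0-hu)=1+\delta$ with $\delta=\sum_{r=1}^4 b_r(u)h^r+O(h^5)$ and $b_r(u)=\frac{(-1)^r}{r!}D^rc(x_0)(u^{(r)})$, and expand $g_u(1+\delta)-g_u(1)=\sum_{k=1}^4\frac{D^kg_u(1)}{k!}\delta^k+O(\delta^5)$, where $D^kg_u(1)$ denotes the $k$th derivative of $v\mapsto g_u(v)$ at $v=1$. Collecting powers of $h$ and integrating over $u$, parity does much of the work: because $\kappa^\gamma$ is even in each coordinate, every $D^kg_u(1)$ is even in $u$, so the $h^1$ and $h^3$ contributions are odd in $u$ and integrate to zero.

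I expect the vanishing of the $h^2$ term to be the crux. That term is $\int\bigl[\tfrac12 Dg_u(1)\,D^2c(x_0)(u,u)+\tfrac12 D^2g_u(1)\,(Dc(x_0)u)^2\bigr]du$, which after discarding off-diagonal pieces by symmetry reduces to the diagonal moments $\int Dg_u(1)\,u_i^2\,du$ and $\int D^2g_u(1)\,u_i^2\,du$. Writing $Dg_u(1)=(d+2)\kappa^\gamma(u)+\nabla\kappa^\gamma(u)\cdot u$ and using $\int(\nabla\kappa^\gamma(u)\cdot u)\,u_i^2\,du=-(d+2)V(d,\gamma)$ (integration by parts), the first moment equals $[(d+2)-(d+2)]V(d,\gamma)=0$; an analogous integration by parts together with the moment values of Lemma~\ref{L:Beta} makes the second moment vanish as well. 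The exponent $d+2$ in $g_u$ is precisely what forces these cancellations, and it is the sum of the $c^d$ from the estimator and the $c^2$ produced by the Abramson substitution: this is exactly where the choice $c=\sqrt{\lambda/\lambda(x_0)}$ is indispensable. With the $h^2$ term gone, the surviving $h^4$ coefficient is read off from the $h^4$ part of $\sum_{k=1}^4\frac{D^kg_u(1)}{k!}\delta^k$; inserting the $b_r$ and matching terms reproduces exactly $A(u;x_0)$, so that $\bias\tilde\lambda(x_0)=\lambda(x_0)h_n^4\int_{\oR^d}A(u;x_0)\,du+\text{remainder}$.

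It remains to show the remainder is $o(h_n^4)$. The neglected contributions are of order $h_n^5$: the fifth-order remainder of $c$, controlled because $\lambda\in C^5$ is bounded, bounded away from zero and has bounded derivatives (so $c$ inherits the same), and the fifth-order remainder of $g_u$ in $v$, controlled because $\gamma>5$ makes $\kappa^\gamma\in C^5$ with bounded derivatives on its compact support. Since for small $h_n$ every integral runs over a fixed compact ball, uniform bounds and dominated convergence promote the pointwise $O(h_n^5)$ into an integrated $o(h_n^4)$, which establishes item~1.
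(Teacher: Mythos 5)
Your proposal is correct and follows essentially the same route as the paper: the same reduction of the bias to $\lambda(x_0)\int_{\oR^d}[g_u(c(x_0\pm h_nu))-g_u(1)]\,du$ via the Abramson identity $\lambda = \lambda(x_0)c^2$, the same double fifth-order Taylor expansion in $c$ and in $g_u$, the same identification of the $h_n^4$ coefficient with $A(u;x_0)$, and the variance carried over from Theorem~\ref{P:biasVarA}. The only (harmless) deviations are cosmetic: you dispatch the odd-order terms $h_n$ and $h_n^3$ by a coordinatewise parity argument rather than the paper's explicit integration-by-parts computations via Lemmas~\ref{L:Dgu} and~\ref{L:PI}, and you re-derive the second-order cancellation (correctly reproducing the consequences of Lemmas~\ref{L:PI} and~\ref{L:PIhigh}) where the paper simply cites Theorem~\ref{P:biasVarA}.
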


For the important special cases $d=1, 2$, the expression for $A(u; x_0)$ may be
simplified. All the proofs are given in Section~\ref{S:proofs-A}.

\begin{prop} \label{P:dim1}
Consider the framework of Theorem~\ref{P:biasVarHall} in one dimension
$d=1$.
Then the coefficient of $h_n^4$ in the expansion of
$\bias \tilde \lambda(x_0)$ is
\[
\frac{ \lambda(x_0)V_4(1,\gamma)}{24}
 \left[
    - \frac{\lambda^{(iv)}(x_0)}{\lambda(x_0)} +
    8 \frac{\lambda^{\prime\prime\prime}(x_0) \lambda^\prime(x_0)}{\lambda(x_0)^2}
    + 6 \frac{(\lambda^{\prime\prime}(x_0))^2}{\lambda(x_0)^2}
   -36 \frac{\lambda^{\prime\prime}(x_0) (\lambda^\prime(x_0))^2}{\lambda(x_0)^3}
   + 24 \frac{ (\lambda^\prime(x_0))^4}{\lambda(x_0)^4}
 \right]
\]
where $V_4(1,\gamma) = 3 / ( (3+2\gamma)(5+2\gamma) )$ and the superscript $(iv)$
indicates the fourth order derivative.
\end{prop}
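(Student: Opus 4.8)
The plan is to start from the expression for $A(u;x_0)$ in Theorem~\ref{P:biasVarHall} and specialise it to $d=1$, where every multilinear derivative collapses to a scalar: $Dc(x_0)u = c'(x_0)u$, $D^2c(x_0)(u,u)=c''(x_0)u^2$, and likewise up to fourth order. Substituting these into $A$, one sees that each of the five terms carries \emph{exactly} the factor $u^4$; the accompanying derivative products are $c^{(iv)}(x_0)$, $c'(x_0)^4$, $c'(x_0)c'''(x_0)$, $c''(x_0)^2$ and $c'(x_0)^2c''(x_0)$, paired respectively with $Dg_u(1)$, $D^4g_u(1)$, $D^2g_u(1)$ (twice) and $D^3g_u(1)$. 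Hence $\int_{\oR} A(u;x_0)\,du$ reduces to a linear combination of the four quantities $\int_{\oR} D^k g_u(1)\,u^4\,du$, $k=1,\dots,4$.

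The key computational device is the identity
\[
\int_{\oR} D^k g_u(1)\,u^4\,du
= \left.\frac{d^k}{dv^k}\,v^{-2}\right|_{v=1} V_4(1,\gamma),
\qquad k=1,\dots,4,
\]
whose right-hand side equals $-2,\,6,\,-24,\,120$ for $k=1,2,3,4$. To obtain it, recall that $g_u(v)=v^{3}\kappa^\gamma(vu)$ when $d=1$; the substitution $w=vu$ together with $\int_{\oR}\kappa^\gamma(w)\,w^4\,dw = V_4(1,\gamma)$ from Lemma~\ref{L:Beta} gives $\int_{\oR} g_u(v)\,u^4\,du = v^{-2}V_4(1,\gamma)$. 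Since $\gamma>5$ makes $\kappa^\gamma$ smooth with compact support, differentiation under the integral sign in $v$ is justified, and differentiating $k$ times before setting $v=1$ yields the stated values. Inserting them collapses the bias coefficient to
\[
\int_{\oR} A(u;x_0)\,du = V_4(1,\gamma)\Big[ -\tfrac{1}{12}c^{(iv)}(x_0) + 5\,c'(x_0)^4 + c'(x_0)c'''(x_0) + \tfrac34\,c''(x_0)^2 - 6\,c'(x_0)^2 c''(x_0)\Big].
\]

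The final step is to re-express the derivatives of $c(x)=\sqrt{\lambda(x)/\lambda(x_0)}$ at $x_0$ in terms of $\lambda$ and its derivatives. Using $c(x_0)=1$ and repeatedly differentiating $c=\lambda^{1/2}\lambda(x_0)^{-1/2}$ (a one-variable Fa\`a di Bruno computation) gives $c'(x_0)=\lambda'/(2\lambda)$, $c''(x_0)=\lambda''/(2\lambda)-(\lambda')^2/(4\lambda^2)$, and analogous formulas for $c'''(x_0)$ and $c^{(iv)}(x_0)$, all evaluated at $x_0$. Substituting these, collecting the monomials $\lambda^{(iv)}/\lambda$, $\lambda'\lambda'''/\lambda^2$, $(\lambda'')^2/\lambda^2$, $(\lambda')^2\lambda''/\lambda^3$ and $(\lambda')^4/\lambda^4$, and finally multiplying by the prefactor $\lambda(x_0)$ from the bias expansion of Theorem~\ref{P:biasVarHall} produces the claimed coefficient, with the common factor $\tfrac{1}{24}$ emerging from the combined rational coefficients.

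The argument is essentially bookkeeping, and the pre-integration observation that every contribution to $A$ is proportional to $u^4$ — so that only the single kernel moment $V_4(1,\gamma)$ ever appears — is what keeps it manageable. The one place to be careful is this last step: the chain-rule expansion of the four derivatives of $c$ and the subsequent collection of like terms is the most error-prone part, since several distinct $c$-products contribute to the same $\lambda$-monomial (for instance $a^4=(\lambda')^4/\lambda^4$ receives contributions from all five terms) and the numerical coefficients must cancel to the stated integers.
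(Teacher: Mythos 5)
Your proof is correct, and its central computation takes a genuinely different route from the paper's. The paper evaluates the four moment integrals $\int_{\oR} u^4 g_u^{(k)}(1)\,du$, $k=1,\dots,4$, by first writing out $g_u^{(k)}(1)$ explicitly via Lemma~\ref{L:Dgu} and then reducing each resulting term with the integration-by-parts identities of Lemma~\ref{L:PI4-high} (e.g.\ $\int u^5 D\kappa^\gamma\,du = -5V_4(1,\gamma)$, $\int u^6 D_{11}\kappa^\gamma\,du = 30V_4(1,\gamma)$), arriving at the values $-2V_4$, $6V_4$, $-24V_4$, $120V_4$ one by one. You instead exploit homogeneity: the substitution $w=vu$ (legitimate since $v=c(x_0+h_nu)>0$) gives the scaling identity $\int_{\oR} g_u(v)\,u^4\,du = v^{-2}V_4(1,\gamma)$, and differentiating under the integral sign produces all four constants at once as $\frac{d^k}{dv^k}v^{-2}\big|_{v=1} = -2,\,6,\,-24,\,120$; the interchange is justified as you say, since for $v$ in a compact neighbourhood of $1$ the integrand is supported in a fixed ball and $\kappa^\gamma$ is four times continuously differentiable for $\gamma>5$ (in fact $\gamma>4$ already suffices for this step). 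Both routes land on the same intermediate expression --- your bracket is exactly the paper's equation~(\ref{e:A}) --- and from there the proofs coincide: expand $c'$, $c''$, $c'''$, $c^{(iv)}$ at $x_0$ in terms of $\lambda$ and collect monomials. You leave that Fa\`a di Bruno bookkeeping implicit where the paper writes it out in full, but your stated end coefficients are right; for instance the $(\lambda')^4/\lambda^4$ coefficient assembles as $\frac{5}{64}+\frac{3}{16}+\frac{3}{64}+\frac{3}{8}+\frac{5}{16}=1=\frac{24}{24}$, matching the claimed $24$ inside the bracket. On balance your scaling argument is shorter and less error-prone, and it explains structurally why only the single moment $V_4(1,\gamma)$ ever appears; the paper's heavier machinery earns its keep mainly because Lemmas~\ref{L:Dgu} and~\ref{L:PI4-high} are reused in the two-dimensional Proposition~\ref{P:dim2} --- though your trick would generalise there as well, since $\int g_u(v)\,u^\beta\,du = v^{2-|\beta|}\int \kappa^\gamma(w)\,w^\beta\,dw$ in any dimension, which would recover the $V_2(2,\gamma)$ terms by the same device.
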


\begin{prop} \label{P:dim2}
Consider the framework of Theorem~\ref{P:biasVarHall} in two dimensions
$d=2$. Then the coefficient of $h_n^4$ in the expansion of
$\bias \tilde \lambda(x_0)$ is
\[
  \lambda(x_0) \left\{ V_4(2,\gamma) C_4 +
   V_2(2,\gamma) C_2 \right\},
\]
with $V_4(2,\gamma) = 3/( (4 + 2\gamma) (6 + 2\gamma) )$,
$V_2(2,\gamma) = 1/( (4 + 2\gamma) (6 + 2\gamma) )$ and constants
\[
  C_4  =
  \sum_{i=1}^2 \left[ \frac{-1}{12} D_{iiii}c(x_0) +
            D_ic(x_0) D_{iii}c(x_0) + \frac{3}{4} ( D_{ii}c(x_0) )^2
            - 6 (D_i c(x_0))^2 D_{ii} c(x_0) + 5 ( D_i c(x_0) )^4
  \right]
\]
and
\begin{eqnarray*}
C_2  & = & 30 (D_1 c(x_0))^2 (D_2 c(x_0))^2
   - 6  (D_1 c(x_0))^2 D_{22}c(x_0) - 6 (D_2 c(x_0))^2 D_{11}c(x_0) \\
   & - & 24 D_1 c(x_0) D_2 c(x_0) D_{12} c(x_0)
        + 3 D_1 c(x_0) D_{122}c(x_0) + 3 D_2 c(x_0) D_{112}c(x_0) \\
 &  + & \frac{3}{2}  D_{11} c(x_0) D_{22}c(x_0) + 3 (D_{12} c(x_0))^2 
    - \frac{1}{2} D_{1122}c(x_0).
\end{eqnarray*}
\end{prop}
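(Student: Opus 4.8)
The plan is to start from the coefficient $\lambda(x_0)\int_{\oR^2} A(u;x_0)\,du$ supplied by Theorem~\ref{P:biasVarHall} and to evaluate that integral by exploiting the homogeneity built into $g_u$. The central observation is a scaling identity: since $g_u(v)=v^{d+2}\kappa^\gamma(vu)$, for any polynomial $P$ that is homogeneous of degree $m$ in $u$ the substitution $w=vu$ gives
\[
\int_{\oR^d} P(u)\,g_u(v)\,du = v^{2-m}\int_{\oR^d} P(w)\,\kappa^\gamma(w)\,dw =: v^{2-m}\,M_P .
\]
Differentiating $k$ times in $v$ and evaluating at $v=1$ then yields
\[
\int_{\oR^d} P(u)\,D^k g_u(1)\,du = (2-m)(1-m)\cdots(3-m-k)\,M_P .
\]
The interchange of differentiation and integration is legitimate because, for $v$ in a fixed neighbourhood of $1$, the supports $\{\|u\|\le 1/v\}$ lie in a common compact set on which $\kappa^\gamma$ and its derivatives are bounded, the required smoothness being guaranteed by $\gamma>5$. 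Each of the four groups making up $A(u;x_0)$ is a product of partial derivatives of $c$ at $x_0$ (constants) with a polynomial in $u$ that is homogeneous of degree exactly $m=4$, so the identity reduces each group to a numerical multiple of a standard moment $M_P$. For $d=2$ and $m=4$ the prefactor collapses to $(-1)^k(k+1)!$, producing the weights $-2,\,6,\,-24,\,120$ for the pieces carrying $Dg_u(1),\,D^2g_u(1),\,D^3g_u(1),\,D^4g_u(1)$ respectively.

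Next I would carry out the moment evaluations. Writing each contraction in coordinates, for example $Dc(x_0)u=\sum_i D_ic(x_0)u_i$ and $D^2c(x_0)(u,u)=\sum_{i,j}D_{ij}c(x_0)u_iu_j$, and expanding the resulting degree-four polynomials multinomially, I invoke Lemma~\ref{L:Beta}: by the reflection symmetry of $\kappa^\gamma$ all moments with an odd exponent vanish, while in two dimensions the only surviving fourth-order moments are $\int u_i^4\kappa^\gamma=V_4(2,\gamma)$ and $\int u_1^2u_2^2\kappa^\gamma=V_2(2,\gamma)$. Thus each $M_P$ splits cleanly into a $V_4$-part (the coefficients of $u_1^4$ and $u_2^4$) and a $V_2$-part (the coefficient of $u_1^2u_2^2$).

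Finally I would collect the contributions. The pure-power monomials $u_i^4$ assemble the $V_4$ factor $C_4$, and the cross monomials $u_1^2u_2^2$ assemble the $V_2$ factor $C_2$; combining these with $\lambda(x_0)$ and the prefactors $V_4(2,\gamma)$, $V_2(2,\gamma)$ gives the claimed formula. As a representative check, the $D^4g_u(1)$ term contributes $\tfrac{120}{24}(Dc(x_0)u)^4$, whose expansion gives $5[(D_1c)^4+(D_2c)^4]$ against $V_4$ and $30(D_1c)^2(D_2c)^2$ against $V_2$, matching the relevant entries of $C_4$ and $C_2$; the other three terms are treated identically.

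The genuine analytic content is confined to the scaling identity and the justification of differentiation under the integral sign, both of which are routine given the compact support of $\kappa^\gamma$ and the smoothness $\gamma>5$. The real labour, and the step where a sign or a combinatorial factor is most likely to go astray, is the multinomial bookkeeping that sorts all the degree-four monomials into the $V_4$ and $V_2$ buckets. Organising the computation monomial-by-monomial, rather than term-by-term, is what keeps it tractable and is the device I would rely on to reach the stated $C_4$ and $C_2$ without error.
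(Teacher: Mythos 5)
Your proof is correct, but it takes a genuinely different route from the paper's. The paper expands $g_u^{(k)}(1)$ via Lemma~\ref{L:Dgu} into partial derivatives of $\kappa^\gamma$ and then evaluates, by repeated integration by parts in the style of Lemma~\ref{L:PI4-high}, a catalogue of roughly eighteen integrals of the form $\int u_i^a u_j^b D_{\cdots}\kappa^\gamma\,du$, before assembling $C_4$ and $C_2$ term by term. You instead exploit homogeneity: for $P$ homogeneous of degree $m$ the substitution $w=vu$ gives $\int_{\oR^d} P(u)\,g_u(v)\,du = v^{2-m}M_P$ with $M_P=\int P\,\kappa^\gamma$, whence, after differentiating under the integral sign (licit here since for $v$ near $1$ the integrands are supported in a common compact set and $\gamma>5$ makes $\kappa^\gamma$ sufficiently smooth), $\int P(u)\,D^k g_u(1)\,du = (2-m)(1-m)\cdots(3-m-k)\,M_P$, which for $m=4$ equals $(-1)^k(k+1)!\,M_P$ — note this factor is independent of $d$, not special to $d=2$. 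This single identity replaces the entire integration-by-parts catalogue and reproduces at a stroke the weights $-2$, $6$, $-24$, $120$ that the paper obtains by explicit computation in its $d=1$ proof of Proposition~\ref{P:dim1}; what remains is the multinomial sorting of the four degree-four contractions of derivatives of $c$ into $V_4$- and $V_2$-buckets via the symmetry statements of Lemma~\ref{L:Beta}, and I have verified that all four groups (not just your representative $(Dc(x_0)u)^4$ check) come out exactly as in $C_4$ and $C_2$: for instance the term $\frac{3}{4}(D^2c(x_0)(u,u))^2$ contributes $\frac{3}{4}\sum_i (D_{ii}c(x_0))^2$ against $V_4$ and $\frac{3}{2}D_{11}c(x_0)D_{22}c(x_0)+3(D_{12}c(x_0))^2$ against $V_2$, and the term $-6(Dc(x_0)u)^2D^2c(x_0)(u,u)$ contributes $-6\sum_i (D_ic(x_0))^2 D_{ii}c(x_0)$ and $-6(D_1c)^2D_{22}c - 6(D_2c)^2D_{11}c - 24\,D_1c\,D_2c\,D_{12}c$ respectively. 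Your route buys brevity, uniformity in the dimension, and a structural explanation of the numerical coefficients; the paper's route stays entirely within its previously established kernel lemmas and needs no interchange-of-limits justification. Your leaving three of the four groups as ``treated identically'' is no worse than the paper's own ``elementary but tedious calculation'' with a single worked example, so I regard the proposal as complete at the level of rigour of the source.
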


Theorem~\ref{P:biasVarHall} immediately yields the asymptotically optimal
bandwidth, which should be compared with that in Corollary~\ref{C:optimal}.

\begin{cor} \label{C:optimalHall}
Consider the setting of Theorem~\ref{P:biasVarHall}. Then
\[
  \mse \tilde \lambda(x_0) =
  \lambda(x_0)^2 \left( \int_{\oR^d} A(u; x_0) du \right)^2 h_n^8 +
  \frac{\lambda(x_0) Q(d,\gamma)}{nh_n^d} + o(h_n^8) +
  O \left( \frac{1}{nh_n^{d-1}} \right). 
\]
The asymptotic mean squared error is optimised at
\[
  h_n^*(x_0) = \frac{1}{n^{1/(d+8)}}
  \left( \frac{ d Q(d,\gamma)}{
      8 \lambda(x_0) \left( \int_{\oR^d} A(u; x_0) du \right)^2} \right)^{1/(d+8)}.
\]
\end{cor}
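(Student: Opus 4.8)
```latex
The plan is to derive Corollary~\ref{C:optimalHall} directly from
Theorem~\ref{P:biasVarHall} by the same mechanism used to obtain
Corollary~\ref{C:optimal} from Theorem~\ref{P:biasVar}. The key observation
is that the mean squared error decomposes as the sum of the squared bias and
the variance, so I would simply combine the two items of
Theorem~\ref{P:biasVarHall}. From item~1, the bias equals
$\lambda(x_0) h_n^4 \int_{\oR^d} A(u;x_0)\,du + o(h_n^4)$, so squaring it and
retaining only the leading contribution yields a squared-bias term
$\lambda(x_0)^2 \left( \int_{\oR^d} A(u;x_0)\,du \right)^2 h_n^8 + o(h_n^8)$;
the cross term between the $h_n^4$ leading part and the $o(h_n^4)$ remainder is
itself $o(h_n^8)$, and the squared remainder is negligible. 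From item~2, the
variance is $\lambda(x_0) Q(d,\gamma)/(nh_n^d) + O(1/(nh_n^{d-1}))$. Adding
these gives precisely the stated expansion of $\mse \tilde\lambda(x_0)$.

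Next I would determine the optimal bandwidth by balancing the two leading
terms. Writing
\[
M(h) = \lambda(x_0)^2 \left( \int_{\oR^d} A(u;x_0)\,du \right)^2 h^8 +
\frac{\lambda(x_0) Q(d,\gamma)}{n h^d},
\]
I would differentiate with respect to $h$, set the derivative to zero, and
solve. Abbreviating $B = \lambda(x_0)^2 \left( \int_{\oR^d} A(u;x_0)\,du
\right)^2$ and $C = \lambda(x_0) Q(d,\gamma)/n$, the stationarity condition
$8 B h^7 - d C h^{-d-1} = 0$ gives $h^{d+8} = d C / (8 B)$, whence
\[
h_n^* = \left( \frac{d\, Q(d,\gamma)}{
8\, \lambda(x_0) \left( \int_{\oR^d} A(u;x_0)\,du \right)^2\, n}
\right)^{1/(d+8)}
= \frac{1}{n^{1/(d+8)}}
\left( \frac{d\, Q(d,\gamma)}{
8\, \lambda(x_0) \left( \int_{\oR^d} A(u;x_0)\,du \right)^2} \right)^{1/(d+8)},
\]
after cancelling one factor of $\lambda(x_0)$ in $C/B$. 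This matches the
asserted form, and the second derivative is positive so the stationary point
is a minimum.

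There is no serious obstacle here: all the analytic work resides in
Theorem~\ref{P:biasVarHall}, and the corollary is a routine consequence. The
only point requiring mild care is verifying that the optimal choice
$h_n^* \asymp n^{-1/(d+8)}$ is consistent with the standing hypotheses of
Theorem~\ref{P:biasVarHall}, namely $h_n \to 0$ and $n h_n^d \to \infty$. The
first is immediate. For the second, $n (h_n^*)^d \asymp n^{1 - d/(d+8)} =
n^{8/(d+8)} \to \infty$, so the admissibility conditions hold and the expansion
applies at the optimum. One should also note that the remainder terms
$o(h_n^8)$ and $O(1/(nh_n^{d-1}))$ are genuinely of smaller order than the two
retained leading terms at $h_n^*$, so that they do not affect the location of
the minimiser to leading order; this justifies optimising $M(h)$ rather than
the full $\mse$.
```
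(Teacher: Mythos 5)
Your proposal is correct and follows essentially the same route as the paper: square the bias expansion from Theorem~\ref{P:biasVarHall} (noting the cross and remainder terms are $o(h_n^8)$), add the variance, and minimise the leading two-term expression $\alpha h_n^8 + \beta/(nh_n^d)$ by calculus, with the second derivative confirming a unique minimum. Your extra check that $h_n^* \asymp n^{-1/(d+8)}$ satisfies $h_n \to 0$ and $n h_n^d \to \infty$ is a sensible addition the paper only makes explicitly in the non-adaptive case.
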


The optimal bandwidth $h_n^*(x_0)$ and the weights $( \lambda(x) / \lambda(x_0) )^{1/2}$ 
depend on the unknown intensity function. In practice, a non-parametric pilot estimator 
(for example the one proposed in \cite{CronLies18}) would be plugged in. 

To conclude this section, we present the analogue of Proposition~\ref{P:bigO}.
The proof can be found in Section~\ref{S:proofs-A}.

\begin{prop}
  \label{P:bigOHall}
Let $\Phi_1, \Phi_2, \ldots$ be i.i.d.\ point processes observed in a
bounded open subset $\emptyset \neq W \subset \oR^d$ with well-defined
intensity function $\lambda$ and pair correlation function $g$.
Suppose that $g:W\times W \to \oR$ is bounded and that
$\lambda: W \to (\underline \lambda, \overline \lambda)$ is
bounded, bounded away from zero and five times continuously differentiable
on $W$ with bounded partial derivatives.
Consider $\tilde \lambda$ with $c(x) = (\lambda(x)/\lambda(x_0))^{1/2}$
based on the unions
$Y_n = \cup_{i\leq n} \Phi_i$, $n\in \oN$, and Beta kernel
$\kappa^\gamma$, $\gamma >5$, with bandwidth $h_n$ chosen in
such a way that as $n\to \infty$, $h_n \to 0$ and
$n h_n^{d} \to \infty$. Then, for $x_0\in W$, as $n\to \infty$,
\[
 {\tilde \lambda(x_0)} 
  = \lambda(x_0) + h_n^4 \lambda(x_0) \int_{\oR^d} A(u; x_0) du + o(h_n^{4})
  + \sqrt{ \lambda(x_0) Q(d,\gamma) } O_P\left( n^{-1/2}h_n^{-d/2} \right)
\]
where $A(u; x_0)$ is as defined in Theorem~\ref{P:biasVarHall}.
\end{prop}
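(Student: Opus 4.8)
The plan is to decompose $\tilde\lambda(x_0)$ into its expectation and a centred fluctuation, handle the expectation with the bias expansion already established in Theorem~\ref{P:biasVarHall}, and control the fluctuation through its variance via Chebyshev's inequality. This mirrors the argument behind Proposition~\ref{P:bigO}; the only difference is that here the deterministic part carries a leading $h_n^4$ term rather than $h_n^2$, because the adaptive scaling $c(x)=(\lambda(x)/\lambda(x_0))^{1/2}$ annihilates the second-order bias.

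Concretely, I would write
\[
\tilde\lambda(x_0) = \EE\,\tilde\lambda(x_0) + \bigl(\tilde\lambda(x_0) - \EE\,\tilde\lambda(x_0)\bigr),
\]
and observe that by definition $\EE\,\tilde\lambda(x_0) = \lambda(x_0) + \bias\,\tilde\lambda(x_0)$. Theorem~\ref{P:biasVarHall}(1) then supplies the first three terms of the claimed expansion, namely $\lambda(x_0) + h_n^4\lambda(x_0)\int_{\oR^d}A(u;x_0)\,du + o(h_n^4)$. The hypotheses here ($\gamma > 5$, $\lambda$ five times continuously differentiable and bounded away from zero, $g$ bounded) are exactly those of Theorem~\ref{P:biasVarHall}, so this step requires no new work beyond invoking that result.

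For the random part, set $Z_n = \tilde\lambda(x_0) - \EE\,\tilde\lambda(x_0)$, a centred random variable whose variance is given by Theorem~\ref{P:biasVarHall}(2):
\[
\var Z_n = \var\,\tilde\lambda(x_0) = \frac{\lambda(x_0)Q(d,\gamma)}{nh_n^d}\Bigl(1 + O(h_n)\Bigr).
\]
Chebyshev's inequality converts this variance bound into tightness: for a mean-zero variable one has $\PP\bigl(|Z_n| > M\sqrt{\var Z_n}\bigr) \le M^{-2}$, whence $Z_n = O_P\bigl(\sqrt{\var Z_n}\bigr)$. Since $h_n \to 0$ and $\lambda(x_0)Q(d,\gamma)>0$, the factor $\sqrt{1 + O(h_n)}$ is $O(1)$ and may be absorbed, giving $Z_n = \sqrt{\lambda(x_0)Q(d,\gamma)}\,O_P(n^{-1/2}h_n^{-d/2})$. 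Adding this fluctuation to the deterministic part yields the statement.

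Since Theorem~\ref{P:biasVarHall} does all the analytic heavy lifting, no genuine obstacle remains; the residual content is the routine Chebyshev step. The only point deserving care is the bookkeeping: one must confirm that the lower-order $O(h_n)$ correction inside the variance is harmlessly swallowed by the $O_P$ symbol and does not degrade the stated $n^{-1/2}h_n^{-d/2}$ rate, and note that a central limit theorem — which would yield asymptotic normality, strictly more than the claimed tightness — is not needed, so no Lindeberg-type verification is required.
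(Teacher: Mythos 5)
Your proposal is correct and follows essentially the same route as the paper: centre $\tilde\lambda(x_0)$ at its mean, invoke Theorem~\ref{P:biasVarHall} for both the bias expansion and the variance, and convert the variance bound into the $O_P(n^{-1/2}h_n^{-d/2})$ rate via Chebyshev's inequality, with the $O(1/(nh_n^{d-1}))$ variance remainder absorbed exactly as you describe. The paper's only cosmetic difference is that it writes the fluctuation explicitly as an average of i.i.d.\ centred variables $Z_i$ before applying Chebyshev, which adds nothing essential to your argument.
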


\section{Proofs and technicalities} \label{S:proofs}

\subsection{Auxiliary lemmas for the Beta kernel}
\label{S:proofsB}

\begin{proof}{Lemma~\ref{L:Beta}}
The first two claims follow from the symmetry of the Beta kernel.
Furthermore
\[
  Q(d, \gamma) = \int_{\oR^d} \kappa^\gamma(x)^2 dx = \frac{1}{c(d,\gamma)^2}
  \int_{b(0,1)} ( 1 - ||x ||^2)^{2\gamma} dx = 
  \frac{c(d, 2\gamma)}{c(d, \gamma)^2}.
\]

Due to the symmetry of the Beta kernel it is clear that the definitions
of $V(d,\gamma)$, $V_4(d,\gamma)$ and $V_2(d,\gamma)$ do not depend
on the choices of $i$ and $j$. First consider the case $d=1$.
By the symmetry of $\kappa^\gamma$ and a change
of variables $v = x^2$, $dx = dv / (2 \sqrt v)$, it follows that
\[
  V(1, \gamma) = \int_{-\infty}^\infty x^2 \kappa^\gamma(x) dx =
 \frac{2}{c(1,\gamma)} \int_{0}^1 v (1 - v)^\gamma  \frac{1}{2 v^{1/2}} dv =
\frac{ B(\frac{3}{2}, \gamma + 1) }{c(1,\gamma)}
= \frac{1}{2\gamma+3}.
\]
Similarly,
\[
  V_4(1, \gamma) = \int_{-\infty}^\infty x^4 \kappa^\gamma(x) dx =
 \frac{2}{c(1,\gamma)} \int_{0}^1 v^2 (1 - v)^\gamma  \frac{1}{2 v^{1/2}} dv =
 \frac{ B(\frac{5}{2}, \gamma + 1) }{c(1,\gamma)} =
 \frac{3}{(2\gamma+3)( 2\gamma + 5)}.
\]

For dimensions $d>1$, write $V(d,\gamma)$ and $V_4(d,\gamma)$ as
a repeated integral and note that the innermost integral takes the form
\[
\int_{ \left\{ \frac{s^2}{1 - || x ||_{d-1}^2} \leq 1 \right\} }
   s^\alpha (1 - || x ||_{d-1}^2 - s^2)^\gamma ds
\]
for $\alpha \in \{ 2, 4 \}$.  By the symmetry and a change of parameters
$t = {s^2} / {(1 - || x ||_{d-1}^2)}$, it follows that
\[
  V(d,\gamma) = \frac{ B\left(\frac{3}{2}, \gamma+1 \right)}{c(d,\gamma)}
    c\left(d-1, \gamma+\frac{3}{2}\right)
\]
and
\[
  V_4(d,\gamma) = \frac{ B\left(\frac{5}{2}, \gamma+1 \right)}{c(d,\gamma)}
    c\left(d-1, \gamma+\frac{5}{2}\right)
\]
in accordance with the claim.

Finally for $d>1$, $  V_2(d, \gamma)$ can be written as
\[
\int_{ \{ ||x||_{d-1}^2 \leq 1 \} } \frac{x_{d-1}^2 }{c(d,\gamma)}
\left( 
\int_{ \left\{ \frac{s^2}{1 - || x ||_{d-1}^2} \leq 1 \right\} }
   s^2 (1 - || x ||_{d-1}^2 - s^2)^\gamma ds \right)
dx_1 \cdots dx_{d-1}.
\]
The inner integral is equal to
\(
( 1 - || x ||^2_{d-1})^{\gamma+3/2} B\left(\frac{3}{2}, \gamma + 1\right)
\)
so
\[
V_2(d,\gamma) = 
\frac{B\left(\frac{3}{2}, \gamma + 1\right) }{c(d, \gamma)}
c\left(d-1,\gamma+\frac{3}{2}\right) V\left(d-1, \gamma + \frac{3}{2}\right)
\]
in accordance with the claim.
\end{proof}

In the sequel, the following additional properties of the Beta kernels
will be needed.

\begin{res} \label{L:PI}
Consider the Beta kernels $\kappa^\gamma$ with $\gamma > 1$ defined in
equation (\ref{e:beta}). Then, for all $i\in \{ 1, \dots, d \}$,
\[
  \int_{\oR^d} u_i D_i\kappa^\gamma(u) du_1 \cdots du_d    = - 1,
\]
the integrals of second order products in $u \in \oR^d$ with respect to
$D_i \kappa^\gamma$ vanish and for distinct $i, j \in \{ 1, \dots, d \}$,
\begin{eqnarray*}
\int_{\oR^d} u_i u_j^2 D_i\kappa^\gamma(u) du_1 \cdots du_d
  & = & - V(d,\gamma) \\
\int_{\oR^d} u_i^3  D_i\kappa^\gamma(u) du_1 \cdots du_d 
  & = & -3 V(d,\gamma).
\end{eqnarray*}
The integrals of other third order products in $u \in \oR^d$ with respect to
$D_i \kappa^\gamma$ vanish. Finally the following identities hold for all
$i \neq j \in \{ 1, \dots, d \}$:
\[
  \int_{\oR^d} u_i u_j \sum_{k=1}^d u_k D_k \kappa^\gamma(u) du_1 \cdots du_d  =
 0
\]
and
\[
  \int_{\oR^d} u_i ^2\sum_{k=1}^d u_k D_k \kappa^\gamma(u) du_1 \cdots du_d  =
    -(d+2) V(d,\gamma).
\]
\end{res}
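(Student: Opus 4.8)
The plan is to reduce every identity to a one-dimensional integration by parts in a single coordinate direction, exploiting that $\kappa^\gamma$ is supported on $b(0,1)$ and vanishes on the sphere $\partial b(0,1)$. Since $\kappa^\gamma(u) \propto (1 - ||u||^2)^\gamma$ on the ball, its partial derivative there is $D_i\kappa^\gamma(u) \propto u_i (1-||u||^2)^{\gamma-1}$, which for $\gamma > 1$ is continuous on $\oR^d$ and vanishes on $\partial b(0,1)$; this is precisely why the hypothesis $\gamma > 1$ is imposed, and it guarantees that all boundary terms appearing below are zero. To illustrate on the first identity, I would integrate by parts in $u_i$ to obtain $\int_{\oR} u_i D_i\kappa^\gamma\,du_i = -\int_{\oR}\kappa^\gamma\,du_i$ (the boundary contribution vanishing), and then integrate over the remaining coordinates to get $-\int_{\oR^d}\kappa^\gamma = -1$, as $\kappa^\gamma$ is a probability density.

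The second tool is a parity observation: from the explicit form above, $D_i\kappa^\gamma$ is odd in $u_i$ and even in every other coordinate $u_l$, $l\neq i$. Consequently $\int_{\oR^d} P(u)\, D_i\kappa^\gamma\,du$ vanishes for any monomial $P$ unless $P$ has odd degree in $u_i$ and even degree in each $u_l$, $l\neq i$. For monomials of total degree two this is impossible, which proves at once that all second-order products integrate to zero. For total degree three the only admissible monomials are $u_i^3$ and $u_i u_j^2$ with $j\neq i$, so every other third-order product vanishes as well. It then remains to evaluate the two survivors by integration by parts in $u_i$: for $u_i^3$ one finds $\int u_i^3 D_i\kappa^\gamma = -3\int u_i^2\kappa^\gamma = -3V(d,\gamma)$, and for $u_iu_j^2$ one finds $\int u_iu_j^2 D_i\kappa^\gamma = -\int u_j^2\kappa^\gamma = -V(d,\gamma)$, where the second moment $\int u_i^2\kappa^\gamma = V(d,\gamma)$ is supplied by Lemma~\ref{L:Beta}.

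Finally, I would assemble the two contracted identities by expanding $\sum_{k} u_k D_k\kappa^\gamma$ inside the integral and applying the parity classification term by term. For $\int u_iu_j \sum_k u_kD_k\kappa^\gamma$ with $i\neq j$, each summand $\int u_iu_ju_k D_k\kappa^\gamma$ has even degree in the differentiation coordinate $k$ (when $k\in\{i,j\}$) or odd degree in some other coordinate (when $k\notin\{i,j\}$), so every term vanishes by parity and the total is $0$. For $\int u_i^2\sum_k u_kD_k\kappa^\gamma$, the diagonal term $k=i$ equals $\int u_i^3 D_i\kappa^\gamma = -3V(d,\gamma)$, while each of the $d-1$ off-diagonal terms $k\neq i$ is of the form $\int u_k u_i^2 D_k\kappa^\gamma = -V(d,\gamma)$; summing gives $-3V(d,\gamma) - (d-1)V(d,\gamma) = -(d+2)V(d,\gamma)$. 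The only points requiring genuine care are the justification that $\gamma>1$ forces all boundary terms to vanish and the bookkeeping in this last contraction, where the split into one diagonal and $d-1$ off-diagonal contributions must be counted correctly; the remaining manipulations are routine.
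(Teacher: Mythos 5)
Your proof is correct and follows essentially the same route as the paper: integration by parts in the differentiation coordinate with vanishing boundary evaluations of $u_i^n u_j^m (1-\|u\|^2)^\gamma$ on $\partial b(0,1)$, symmetry to kill the non-surviving monomials, and the count of one diagonal contribution $-3V(d,\gamma)$ plus $d-1$ off-diagonal contributions $-V(d,\gamma)$ for the final contracted identity. One small precision: the boundary terms already vanish for every $\gamma>0$ since $\kappa^\gamma$ itself is zero on the sphere; the hypothesis $\gamma>1$ is needed so that $D_i\kappa^\gamma \propto u_i(1-\|u\|^2)^{\gamma-1}$ exists and is continuous on all of $\oR^d$, which your first paragraph in fact states correctly before attributing the hypothesis to the boundary terms.
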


\begin{proof}{Lemma~\ref{L:PI}}
The proof relies on partial integrations, which involve evaluations
of
\(
    u_i^n u_j^m ( 1 - || u||^2 ) ^\gamma
\)
for
\(
|u_i| = ( 1 - || u_{(-i)} ||^2 )^{1/2}
\)
where $ u_{(-i)} = ( u_1, \dots, u_{i-1}, u_{i+1}, \dots, u_d)$.
These take the value zero, as $(1-||u||^2) = 0$.
Therefore
\[
\int_{\oR^d} u_i D_i\kappa^\gamma(u) du = - \int_{\oR^d} \kappa^\gamma(u) du = -1.
\]
Similarly
\[
  \int_{\oR^d} u_i u_j^2 D_i\kappa^\gamma(u) du = -  \int_{\oR^d} u_j^2 \kappa^\gamma(u) du =
  - V(d,\gamma)
\]
and
\[
  \int_{\oR^d} u_i^3 D_i\kappa^\gamma(u) du = - 3 \int_{\oR^d} u_i^2 \kappa^\gamma(u) du =
  - 3V(d, \gamma).
\]
Hence, for $i\neq j$, penultimate equation in the lemma holds.
To prove the last equation in the lemma, note that there
are $d-1$ contributions of $-V(d,\gamma)$ to the left-hand side 
and one of $-3V(d,\gamma)$. 
\end{proof}

\begin{res}\label{L:PIhigh}
Consider the Beta kernels $\kappa^\gamma$ with $\gamma > 2$ defined in
equation (\ref{e:beta}). Then, for all $i\neq j \in \{ 1, \dots, d \}$,
\[
  \int_{\oR^d} u_i u_j  \sum_{k=1}^d \sum_{l=1}^d u_k u_l D_{kl} \kappa^\gamma(u)
  du_1 \cdots du_d  =  0
\]
and
\[
 \int_{\oR^d} u_i^2 \sum_{k=1}^d \sum_{l=1}^d u_k u_l D_{kl} \kappa^\gamma(u)
 du_1 \cdots du_d  = (d+2) (d+3) V(d,\gamma).
\]
\end{res}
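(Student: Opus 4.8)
The plan is to reduce both identities to the moment $V(d,\gamma)$ already computed in Lemma~\ref{L:Beta}, by exploiting the fact that the second-order operator $\sum_{k,l} u_k u_l D_{kl}$ appearing in the statement is, up to a first-order correction, the square of the Euler (radial) operator. Write $E := \sum_{k=1}^d u_k D_k$. A one-line computation with the product rule, using $D_k(u_l D_l f) = \delta_{kl} D_l f + u_l D_{kl} f$, gives the algebraic identity
\[
\sum_{k=1}^d \sum_{l=1}^d u_k u_l D_{kl} f = E^2 f - E f = E(E-1) f ,
\]
valid for any twice continuously differentiable $f$. Since $\gamma > 2$ guarantees that $\kappa^\gamma$ lies in ${\cal C}^2(\oR^d)$, both integrals in the lemma may be rewritten as $\int_{\oR^d} u_i u_j\, E(E-1)\kappa^\gamma\, du$ (for $i\neq j$) and $\int_{\oR^d} u_i^2\, E(E-1)\kappa^\gamma\, du$.

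The second ingredient is the adjoint of $E$ under integration over $\oR^d$. Integrating by parts coordinate by coordinate, for a polynomial $g$ and a function $f$ that vanishes on the sphere $\partial b(0,1)$ one obtains
\[
\int_{\oR^d} g\,(E f)\, du = - \int_{\oR^d} \left[ (E + d) g \right] f\, du ,
\]
the boundary terms vanishing because $\kappa^\gamma$ and its first-order derivatives carry factors $(1- || u ||^2)^\gamma$ and $(1- || u ||^2)^{\gamma-1}$ that are zero on $\partial b(0,1)$ once $\gamma > 1$. I shall also use that $E$ acts on homogeneous quadratics by multiplication by their degree, namely $E(u_i u_j) = 2 u_i u_j$ and $E(u_i^2) = 2 u_i^2$, so that $(E+d)$ multiplies such a quadratic by $(d+2)$.

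With these tools the two identities follow quickly. For $i\neq j$, observe that $\kappa^\gamma$ is a function of $|| u ||^2$ alone, so $E$ preserves radiality and $E(E-1)\kappa^\gamma$ is again radial, hence even in the coordinate $u_i$; the integrand $u_i u_j\, E(E-1)\kappa^\gamma$ is therefore odd in $u_i$ and integrates to zero, giving the first claim. For the second identity I peel off one factor of $E$ at a time with the adjoint relation:
\[
\int_{\oR^d} u_i^2\, E(E-1)\kappa^\gamma\, du = -(d+2) \int_{\oR^d} u_i^2 (E-1)\kappa^\gamma\, du = -(d+2)\left[ \int_{\oR^d} u_i^2\, E\kappa^\gamma\, du - \int_{\oR^d} u_i^2 \kappa^\gamma\, du \right] .
\]
A further application yields $\int_{\oR^d} u_i^2\, E\kappa^\gamma\, du = -(d+2)\int_{\oR^d} u_i^2 \kappa^\gamma\, du = -(d+2) V(d,\gamma)$, which is exactly the last identity of Lemma~\ref{L:PI} and serves as a consistency check; together with $\int_{\oR^d} u_i^2 \kappa^\gamma\, du = V(d,\gamma)$ from Lemma~\ref{L:Beta} this gives $-(d+2)\left[ -(d+2)V(d,\gamma) - V(d,\gamma) \right] = (d+2)(d+3) V(d,\gamma)$, as asserted.

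The only delicate point is the vanishing of the boundary terms in the integration by parts, and this is precisely where the hypothesis $\gamma > 2$ enters: it both makes $D_{kl}\kappa^\gamma$ a genuine continuous function so that the left-hand sides are classically defined, and forces the surface integrals over $\partial b(0,1)$ to vanish through the surviving powers of $(1- || u ||^2)$. Everything else is routine manipulation of the first-order operator $E$. An equivalent route, closer in spirit to the proof of Lemma~\ref{L:PI}, is to integrate by parts twice directly so as to replace $\int_{\oR^d} P\, D_{kl}\kappa^\gamma\, du$ by $\int_{\oR^d} (D_{kl} P)\,\kappa^\gamma\, du$ for the polynomials $P = u_i u_j u_k u_l$ and $P = u_i^2 u_k u_l$, and then to sum over $k$ and $l$; this reproduces the same answers but requires keeping track of more terms than the operator computation above.
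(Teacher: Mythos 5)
Your proof is correct, but it takes a genuinely different route from the paper. The paper proceeds by brute-force bookkeeping: it uses integration by parts and Lemma~\ref{L:PI} to evaluate the four basic moments $\int u_i^2 u_k u_l D_{kl}\kappa^\gamma\,du = V(d,\gamma)$, $\int u_k^3 u_l D_{kl}\kappa^\gamma\,du = 3V(d,\gamma)$, $\int u_i^2 u_k^2 D_{kk}\kappa^\gamma\,du = 2V(d,\gamma)$ and $\int u_k^4 D_{kk}\kappa^\gamma\,du = 12V(d,\gamma)$ (all other fourth-order products vanishing by symmetry), and then counts multiplicities, $(d-1)(d-2) + 2(d-1) + 6(d-1) + 12 = (d+2)(d+3)$. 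You instead factor the differential operator as $\sum_{k,l} u_k u_l D_{kl} = E(E-1)$ with $E$ the Euler operator, verify the adjoint relation $\int g\,(Ef)\,du = -\int [(E+d)g]\,f\,du$, and use that $(E+d)$ multiplies quadratics by $d+2$; peeling off the operators reduces everything to $\int u_i^2 \kappa^\gamma\,du = V(d,\gamma)$ and the already-established identity $\int u_i^2\,E\kappa^\gamma\,du = -(d+2)V(d,\gamma)$ from Lemma~\ref{L:PI}, giving $-(d+2)\bigl[-(d+2)V(d,\gamma) - V(d,\gamma)\bigr] = (d+2)(d+3)V(d,\gamma)$. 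Your parity argument for the $i\neq j$ case (radiality of $E(E-1)\kappa^\gamma$, oddness in $u_i$) is also sound and matches the paper's symmetry reasoning in spirit. Your approach buys conceptual economy: the factor $(d+2)(d+3)$ emerges as an eigenvalue-type computation with no combinatorial case analysis, and it makes transparent exactly where the hypotheses enter ($\gamma>2$ for $\kappa^\gamma\in{\cal C}^2(\oR^d)$ so that $(E-1)\kappa^\gamma$ is ${\cal C}^1$; boundary terms already vanish for $\gamma>1$, as you correctly note). The paper's approach buys a table of individual moment integrals against $D_{kl}\kappa^\gamma$, which is of the same species as the integrals it must compute anyway for Lemma~\ref{L:PI4-high} and Proposition~\ref{P:dim2}, where the contracted operator form no longer suffices and coefficient-by-coefficient values are needed.
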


\begin{proof}{Lemma~\ref{L:PIhigh}}
Apply integration by parts and Lemma~\ref{L:PI} to obtain
that for all distinct $i, j, k, l$ in $\{ 1, \dots, d \}$,
\begin{eqnarray*}
 \int_{\oR^d} u_i^2 u_k u_l D_{kl} \kappa^\gamma(u) du_1 \cdots du_d & = & V(d,\gamma) \\
  \int_{\oR^d} u_k^3 u_l D_{kl} \kappa^\gamma(u)  du_1 \cdots du_d& = & 3V(d,\gamma) \\
 \int_{\oR^d} u_i^2 u_k^2 D_{kk} \kappa^\gamma(u) du_1 \cdots du_d & = & 2V(d,\gamma) \\
 \int_{\oR^d} u_k^4 D_{kk} \kappa^\gamma(u) du_1 \cdots du_d & = & 12V(d,\gamma).
\end{eqnarray*}
The evaluations of products in $u$ multiplied by $D_k\kappa^\gamma(u)$ are zero
since
\[
D_k \kappa^\gamma(u) \propto u_k ( 1 - || u ||^2 )^{\gamma - 1},
\]
which take the value zero when $||u|| = 1$ and $\gamma > 1$.
All other integrals of fourth order products in $u \in \oR^d$ with respect
to $D_{kl}\kappa^\gamma$ or $D_{kk}\kappa^\gamma$ vanish.

Consider the two equations to be proven. For $i\neq j$, all contributions to
the left-hand side of the first equation are zero. For $i=j$, there are
$(d-1)^2$ contributions with $k, l \not \in \{ i \}$,
of which $(d-1)(d-2)$ are of size $V(d,\gamma)$ for $k\neq l$ and
$d-1$ of size $2V(d,\gamma)$ for $k=l$.
To this are added $2(d-1)$ contributions $3V(d,\gamma)$ when exactly one
of $k, l$ is equal to $i$, and one contribution $12 V(d,\gamma)$ when
$i=k=l$. Adding them all up gives
\[
(d-1)(d-2) V(d,\gamma) + (d-1) 2 V(d,\gamma) + 2(d-1) 3V(d,\gamma) + 12 V(d,\gamma)
\]
and rearranging terms completes the proof.
\end{proof}

\begin{res} \label{L:Dgu}
For fixed $u\in \oR^d$, the function  $g_u: \oR \to \oR$ defined by
$g_u(v)  = v^{d+2} \kappa^\gamma(vu)$ is, for the Beta kernel $\kappa^\gamma$
with $\gamma > 4$, four times continuously differentiable. The first 
three derivatives are given by
\begin{eqnarray*}
  g^\prime_u(v)  & = & (d+2) v^{d+1} \kappa^\gamma(vu) +
  v^{d+2} D\kappa^\gamma(vu) u, \\
  g^{\prime\prime}_u(v) & = & (d+1) (d+2) v^{d} \kappa^\gamma(vu) +
          2 (d+2) v^{d+1} D\kappa^\gamma(vu) u +
          v^{d+2} D^2\kappa^\gamma(vu) (u, u), \\
  g^{\prime\prime\prime}_u(v) & = & d (d+1) (d+2) v^{d-1} \kappa^\gamma(vu) +
  3 (d+1) (d+2) v^{d}  D\kappa^\gamma(vu) u \\
  & + &
   3 (d+2) v^{d+1} D^2\kappa^\gamma(vu) (u, u)
 + v^{d+2} D^3\kappa^\gamma(vu) (u, u, u)
\end{eqnarray*}
and the fourth order derivative is
\[
  g^{(iv)}_u(v)  =  (d-1)d (d+1) (d+2) v^{d-2} \kappa^\gamma(vu) +
  4d(d+1)(d+2) v^{d-1}D\kappa^\gamma(vu) u +
\]
\[
      6 (d+1)(d+2)  v^d D^2\kappa^\gamma(vu) (u, u)
   +  4 ( d+2) v^{d+1}  D^3\kappa^\gamma(vu) (u, u, u)
   +  v^{d+2} D^4\kappa^\gamma(vu)(u, u, u, u).
\]
\end{res}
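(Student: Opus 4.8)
The plan is to obtain all four derivatives by a direct computation that combines the Leibniz product rule, applied to the factorisation $g_u(v) = v^{d+2}\cdot\kappa^\gamma(vu)$, with the chain rule for differentiating the univariate restriction $v\mapsto\kappa^\gamma(vu)$ of the multivariate kernel along the ray through $u$. The only genuinely analytic point to settle is that all four derivatives exist and are continuous; once this is granted, the displayed formulas are produced by elementary bookkeeping.

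First I would establish smoothness. Since $\gamma>4$, the remark following (\ref{e:beta}) guarantees that $\kappa^\gamma$ is four times continuously differentiable on $\oR^d$. The map $v\mapsto vu$ is affine, hence infinitely differentiable, so $v\mapsto\kappa^\gamma(vu)$ lies in ${\cal{C}}^4(\oR)$; multiplying by the polynomial $v^{d+2}$ preserves this, whence $g_u\in{\cal{C}}^4(\oR)$. This justifies the repeated differentiation carried out below and, via the interchangeability of partial derivatives for ${\cal{C}}^k$ functions noted before (\ref{e:Taylor}), the use of the symmetric directional-derivative notation $D^r\kappa^\gamma(vu)(u,\dots,u)$.

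Next I would record the chain-rule identity $\frac{d^r}{dv^r}\kappa^\gamma(vu)=D^r\kappa^\gamma(vu)(u^{(r)})$ for $r=1,\dots,4$, where $u^{(r)}$ denotes the $r$-tuple $(u,\dots,u)$ as in (\ref{e:Taylor}), so that $D^r\kappa^\gamma(vu)(u^{(r)})=\sum_{j_1,\dots,j_r=1}^d u_{j_1}\cdots u_{j_r}\,D_{j_1\cdots j_r}\kappa^\gamma(vu)$. This follows by induction on $r$: the base case is the ordinary chain rule $\frac{d}{dv}\kappa^\gamma(vu)=\sum_{i=1}^d u_i\,D_i\kappa^\gamma(vu)$, and differentiating $\sum_{j_1,\dots,j_r}u_{j_1}\cdots u_{j_r}\,D_{j_1\cdots j_r}\kappa^\gamma(vu)$ once more in $v$ introduces exactly one additional factor $u_{j_{r+1}}$ together with one additional index of differentiation, giving $D^{r+1}\kappa^\gamma(vu)(u^{(r+1)})$.

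Finally I would assemble the derivatives of $g_u$ through the Leibniz rule $g_u^{(r)}(v)=\sum_{k=0}^{r}\binom{r}{k}\bigl(\frac{d^k}{dv^k}v^{d+2}\bigr)\frac{d^{r-k}}{dv^{r-k}}\kappa^\gamma(vu)$, substituting $\frac{d^k}{dv^k}v^{d+2}=(d+2)(d+1)\cdots(d+3-k)\,v^{d+2-k}$ together with the chain-rule identity of the previous step. Collecting terms for $r=1,2,3,4$ reproduces the four displayed expressions; in particular, for $r=4$ the binomial coefficients $1,4,6,4,1$ combine with the falling factorials to yield the coefficients $1$, $4(d+2)$, $6(d+1)(d+2)$, $4d(d+1)(d+2)$ and $(d-1)d(d+1)(d+2)$. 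The main obstacle is purely one of bookkeeping, namely keeping the binomial coefficients and falling factorials aligned in the fourth-order expansion, since no analytic input beyond the smoothness secured in the second step is required.
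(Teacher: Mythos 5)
Your proposal is correct and matches the paper's proof, which simply asserts that $\kappa^\gamma$ is four times continuously differentiable for $\gamma>4$ and that the formulas follow by straightforward calculation; your Leibniz-rule-plus-chain-rule computation is exactly that calculation made explicit, with all binomial coefficients and falling factorials verified. The only difference is that you spell out the bookkeeping the paper leaves implicit, which is a virtue rather than a deviation.
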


\begin{proof}{Lemma~\ref{L:Dgu}} 
For $\gamma > 4$, the function $\kappa^\gamma$ is four times continuously
differentiable. The expressions for the derivatives follow by straightforward 
calculation.
\end{proof}

\begin{res}\label{L:PI4-high}
Consider the Beta kernels $\kappa^\gamma$ with $\gamma > 4$ defined in
equation (\ref{e:beta}). Then, for all $i \in \{ 1, \dots, d \}$,
\[
   \int_{\oR^d} u_i^5 D_i \kappa^\gamma(u)du = -5 V_4(d, \gamma); \quad
   \int_{\oR^d} u_i^6 D_{ii} \kappa^\gamma(u)du = 30 V_4(d, \gamma); 
 \]
 \[
   \int_{\oR^d} u_i^7 D_{iii} \kappa^\gamma(u)du = -210 V_4(d, \gamma); \quad
   \int_{\oR^d} u_i^8 D_{iiii} \kappa^\gamma(u)du = 1680 V_4(d, \gamma).
 \]
\end{res}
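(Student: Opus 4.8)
The plan is to obtain all four identities by a single integration by parts each, carried out in the one coordinate $u_i$ and reducing the integral involving the $k$-th order derivative to the one involving the $(k-1)$-th, until the chain bottoms out at the already-known value $\int_{\oR^d} u_i^4 \kappa^\gamma(u)\, du = V_4(d,\gamma)$ from Lemma~\ref{L:Beta}. This is exactly the mechanism used in the proofs of Lemma~\ref{L:PI} and Lemma~\ref{L:PIhigh}, now pushed one order higher. The point is that on the ball $\kappa^\gamma(u)$ is proportional to $(1 - ||u||^2)^\gamma$, so each differentiation lowers the exponent of the vanishing factor $(1-||u||^2)$ by one, and since $\gamma > 4$ all the exponents arising through fourth order stay positive.

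Concretely, writing each full integral as an iterated integral and integrating the innermost $u_i$-integral by parts, I would produce the chain
\[
\int_{\oR^d} u_i^5 D_i\kappa^\gamma \, du = -5\int_{\oR^d} u_i^4 \kappa^\gamma\, du,
\quad
\int_{\oR^d} u_i^6 D_{ii}\kappa^\gamma\, du = -6\int_{\oR^d} u_i^5 D_i\kappa^\gamma\, du,
\]
and likewise with multipliers $7$ and $8$ for the remaining two, the factors $5,6,7,8$ coming from differentiating the polynomial $u_i^{4+k}$. Substituting each line into the next gives $-5V_4$, then $30V_4 = -6\cdot(-5)V_4$, then $-210V_4 = -7\cdot 30\,V_4$, and finally $1680V_4 = -8\cdot(-210)V_4$, matching the claimed values (the coefficient is $(-1)^k\,(4+k)!/4!$ for the $k$-th derivative).

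The only step requiring genuine care, and the one I would treat as the crux, is the vanishing of the boundary terms. The innermost integral over $u_i$ runs between the two points where $||u|| = 1$, so each boundary term is evaluated where $1 - ||u||^2 = 0$. The derivatives appearing there satisfy $D_i\kappa^\gamma(u) \propto u_i(1-||u||^2)^{\gamma-1}$, while $D_{ii}\kappa^\gamma$ has lowest factor $(1-||u||^2)^{\gamma-2}$ and $D_{iii}\kappa^\gamma$ has lowest factor $(1-||u||^2)^{\gamma-3}$; these are precisely the derivatives entering the boundary terms of the four integrations by parts, and each exponent is strictly positive under $\gamma > 4$, so every boundary contribution vanishes. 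The hypothesis $\gamma > 4$ serves a second purpose here as well: it guarantees $\kappa^\gamma \in {\cal C}^4$, so that $D_{iiii}\kappa^\gamma$ exists and is continuous and the last integrand is well defined. With the boundary terms gone, the remaining integrals over the other coordinates are finite by Lemma~\ref{L:Beta} and Fubini legitimises integrating $u_i$ first; the rest of the computation is purely mechanical.
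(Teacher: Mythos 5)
Your proposal is correct and follows essentially the same route as the paper, whose proof of this lemma is precisely repeated integration by parts in the coordinate $u_i$, with every boundary evaluation vanishing because it carries a factor $(1-\|u\|^2)^\alpha$, $0 < \alpha \leq \gamma$, at $\|u\|=1$. Your chain with multipliers $5,6,7,8$ terminating at $\int_{\oR^d} u_i^4 \kappa^\gamma(u)\,du = V_4(d,\gamma)$, together with the bookkeeping of the exponents $\gamma-1,\gamma-2,\gamma-3$ in the successive boundary terms, merely spells out what the paper's two-line proof leaves implicit.
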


\begin{proof}{Lemma~\ref{L:PI4-high}}
The proof relies on repeated integration by parts. The evaluations
of $u_i^m (1 - || u_{(-i)}||^2)^{\alpha}$ at $|u_i| = ( 1 - ||u_{(-i)}||^2)^{1/2}$
where $u_{(-i)} = (u_1, \dots, u_{i-1}, u_{i+1}, \dots, u_d)$ all take the
value zero for $0 < \alpha \leq \gamma$.
\end{proof}

\subsection{Proofs of propositions and theorems: non-adaptive case}
\label{S:proofs-nA}

\begin{proof}{Proposition~\ref{P:mse}}
Since $\widehat{\lambda(x_0)}$ is the average of $n$ independent random
variables $\widehat{\lambda(x_0; h, \Phi_i, W)}$, $i=1, \dots, n$,
\[
\EE \widehat{\lambda(x_0)} = \EE \widehat{\lambda(x_0; h, \Phi_1, W)}
\]
and
\[
  \var \widehat{\lambda(x_0)} =
  \frac{1}{n} \var \widehat{\lambda(x_0; h, \Phi_1, W)}.
\]
Therefore, by Lemma~\ref{L:moments},
\[
\EE \widehat{\lambda(x_0)} =
\frac{1}{h^d} \int_{b(x_0, h) \cap W} 
\kappa^\gamma\left( \frac{x_0 - u}{h} \right) \lambda(u) du
\]
and
\[
\var \widehat{\lambda(x_0)}   = 
\frac{1}{n h^{2d}}
\int_{b(x_0, h) \cap W} 
\int_{b(x_0, h) \cap W}
   \kappa^\gamma\left( \frac{x_0 - u}{h} \right)
   \kappa^\gamma\left( \frac{x_0 - v}{h} \right)
   ( g(u,v) - 1 ) \lambda(u) \lambda(v) du dv
\]
\[
 + 
\frac{1}{nh^{2d}}
\int_{b(x_0, h) \cap W} \kappa^\gamma\left( \frac{x_0 - u}{h} \right)^2
   \lambda(u) du.
\]
Since $\mse\widehat{\lambda(x_0)}$ is the sum of the squared bias
and the variance, the claim is seen to hold.
\end{proof}

\begin{proof}{of Theorem~\ref{P:biasVar}}
To prove 1.\ note that since $h_n$ goes to zero, $x_0 \in W$ and $W$ is open,
for $n$ large enough $b(x_0, h_n) \cap W$ is equal to $b(x_0, h_n)$. 
For such $n$, by a change of variables, the symmetry of the Beta kernels and 
the proof of Proposition~\ref{P:mse}, the bias is 
\begin{equation}\label{e:biasHn}
 \int_{b(0,1)} \kappa^\gamma(u) \left\{ \lambda(x_0+h_n u)- \lambda(x_0) \right\} du.
\end{equation}
The intensity $\lambda(x_0)$ can be brought under the integral since
$\kappa^\gamma$ is a probability density. 

Fix $u \in b(0,1)$.  As $x_0 + t h_n u \in W$ for all $0\leq t \leq 1$ and 
$\lambda$ is twice continuously differentiable on $W$, the term between curly 
brackets in the integrand may be expanded as a Taylor series (\ref{e:Taylor}) 
with $k=2$:
\[
\lambda(x_0+h_n u)- \lambda(x_0) = h_n D\lambda(x_0) u +
\frac{h_n^2}{2} D^2 \lambda(x_0 + \theta h_n u) (u, u)
\]
for some $0 < \theta = \theta(u) < 1$ that may depend on $u$. Write
\[
 D^2 \lambda(x_0 + \theta h_n u) (u, u) =
 D^2 \lambda(x_0 + \theta h_n u) (u, u) -  D^2 \lambda(x_0) (u, u) 
+
 D^2 \lambda(x_0) (u, u). 
\]
Now,
\[
\left| D^2 \lambda(x_0 + \theta h_n u) (u, u) -  D^2 \lambda(x_0) (u, u) \right|
= \left| 
\sum_{i=1}^d \sum_{j=1}^d u_i u_j ( \lambda_{ij}(x_0 + \theta h_n u)
    - \lambda_{ij}(x_0) ) 
\right| 
\]
is dominated by
\[
  \sum_{i=1}^d \sum_{j=1}^d \left| \lambda_{ij}(x_0 + \theta h_n u) 
    -  \lambda_{ij}(x_0) ) \right| 
\]
since $|u_i| \leq 1$. Since $n$ was chosen large enough for
$x_0 + \theta h_n u$ to lie in $W$, we may use the H\H{o}lder 
assumption to obtain the inequality
\[
  \left| D^2 \lambda(x_0 + \theta h_n u) (u, u) -  D^2 \lambda(x_0) (u, u) \right|
\leq
C \sum_{i=1}^d \sum_{j=1}^d || \theta h_n u ||^\alpha 
\leq d^2 C h_n^\alpha .
\]
The right hand side does not depend on the particular choice of
$u \in b(0,1)$ nor on $\theta(u) \in (0,1)$. 
In summary,
\[
\lambda(x_0 + h_n u)- \lambda(x_0) = h_n D\lambda(x_0) u + 
\frac{h_n^2}{2} D^2 \lambda(x_0)(u,u) + R(h_n, u)
\]
for a remainder term $R(h_n, u)$ that satisfies 
$| R(h_n, u) | \leq  C d^2 h_n^{2+\alpha} / 2$.

Returning to the bias (\ref{e:biasHn}), for large $n$,
\begin{eqnarray*}
\bias \widehat{ \lambda(x_0) }  & = &
h_n \int_{b(0,1)} \kappa^\gamma(u) D\lambda(x_0) u du 
 + 
 \frac{h_n^2}{2} \int_{b(0,1)} \kappa^\gamma(u) D^2 \lambda(x_0)(u,u) du
\\
& + &
 \int_{b(0,1)} \kappa^\gamma(u) R(h_n, u) du.
\end{eqnarray*}
By Lemma~\ref{L:Beta},
\[
h_n \int_{b(0,1)} \kappa^\gamma(u) D\lambda(x_0) u du =
h_n  \sum_{i=1}^d  D_i \lambda(x_0) \int_{b(0,1)}  u_i \kappa^\gamma(u) du = 0.
\]
Furthermore,
\begin{eqnarray*}
\frac{h_n^2}{2}
\int_{b(0,1)} \kappa^\gamma(u) D^2 \lambda(x_0) (u, u) du 
& = &
\frac{h_n^2}{2} 
\sum_{i=1}^d \sum_{j=1}^d \lambda_{ij}(x_0) 
\int_{b(0,1)}  u_i u_j \kappa^\gamma(u)  du
  \\
& = &
\frac{h_n^2}{2} 
\sum_{i=1}^d \lambda_{ii}(x_0) V(d,\gamma)
\end{eqnarray*}
because by Lemma~\ref{L:Beta}, the cross terms with $i\neq j$ are zero.
Finally, since $\kappa^\gamma$ is a probability density and $R(h_n,u)$ is
uniformly bounded in $u \in b(0,1)$,
\[
\left | \int_{b(0,1)} \kappa^\gamma(u) R(h_n, u) du \right| \leq 
 \frac{ C d^2}{2} h_n^{2 + \alpha} .
\]

\bigskip

To prove 2. note that, as for the bias, $n$ may be chosen large enough for the
ball $b(x_0, h_n)$ to fall entirely in $W$. For such $n$, by a change
of variables $u=(x-x_0)/h_n$ and the symmetry of the Beta kernels,
\[
\frac{1}{ n h_n^{2d} }
\int_{b(x_0, h_n)} \kappa^\gamma\left(\frac{x_0-x}{h_n}\right)^2 \lambda(x) dx =
\frac{1}{ n h_n^{d}}
\int_{b(0,1)} \kappa^\gamma(u)^2 \lambda(x_0 + h_n u) du.
\]
Fix $u \in b(0,1)$. As $x_0 + t h_n u \in W$ for all $0\leq t \leq 1$ and 
$\lambda$ is continuously differentiable on $W$, 
we may use the Taylor expansion (\ref{e:Taylor}) with $k=1$ to write
\[
  \lambda(x_0 + h_n u) = \lambda(x_0) + h_n D\lambda(x_0 + \theta h_n u) u
   = \lambda(x_0) + h_n \sum_{i=1}^d D_i \lambda(x_0 + \theta h_n u) u_i
\]
for some $0 < \theta = \theta(u) < 1$ that may depend on $u$.
Since the partial derivatives are continuous and hence bounded on 
closed balls contained in $W$, say by $\overline{ D\lambda }$, 
\begin{equation}\label{e:lambdaT1}
\lambda(x_0 + h_n u) = \lambda(x_0) + R(h_n, u)
\end{equation}
for a remainder term $R(h_n, u)$ that satisfies $|R(h_n, u)| \leq d h_n
\overline{ D\lambda }$ and consequently 
\[
\frac{1}{ n h_n^d }
\int_{b(0,1)} \kappa^\gamma(u)^2 \lambda(x_0 + h_n u) du
=
\frac{1}{ n h_n^d } \lambda(x_0) Q(d,\gamma)
+ \frac{1}{ n h_n^d }
\int_{b(0,1)} \kappa^\gamma(u)^2 R(h_n, u) du
\]
by Lemma~\ref{L:Beta}. The bound on the remainder term $R(h_n,u)$ implies that 
\[
\left| \frac{1}{ n h_n^d } \int_{b(0,1)} \kappa^\gamma(u)^2 R(h_n, u) du \right|
\leq  \frac{1}{ n h_n^d } \int_{b(0,1)} \kappa^\gamma(u)^2 | R(h_n, u) | du 
\leq \frac{d h_n \overline{ D\lambda } }{ n h_n^d} Q(d, \gamma) 
\]
so that 
\begin{equation}\label{e:varPoisHn}
  \frac{1}{ n h_n^{2d} }
\int_{b(x_0, h_n)} \kappa^\gamma\left(\frac{x_0-x}{h_n}\right)^2 \lambda(x) dx =
\frac{\lambda(x_0) Q(d,\gamma)}{ n h_n^d }  +
O\left( \frac{1}{nh_n^{d-1}} \right).
\end{equation}

We will now show that the contribution of the interaction structure
(through the pair correlation function) to the variance vanishes.
Choose $n$ so large that $b(x_0, h_n) \subseteq W$. Then, by a change of 
variables and the symmetry of the Beta kernels, the double integral in 
Proposition~\ref{P:mse} reduces to
\[
  \frac{1}{n} \int_{b(0,1)} \int_{b(0,1)}
 \kappa^\gamma(u) \kappa^\gamma(v)
( g( x_0 + h_nu, x_0 + h_nv) - 1 ) 
\lambda(x_0 + h_nu) \lambda(x_0 + h_nv)  du dv.
\]
Since the pair correlation is assumed to be bounded on $W$, say 
$g(\cdot, \cdot) \leq \overline g$, and $x_0 + h_nu \in W$ for all
$u\in b(0,1)$, the double integral can be bounded in absolute value by
\[
  \frac{ 1 + \overline g}{n} \left( \int_{b(0,1)} 
   \kappa^\gamma(u)  \lambda(x_0 + h_nu)  du \right)^2
=
  \frac{ 1 + \overline g}{n} \left( \int_{b(0,1)} 
   \kappa^\gamma(u) \left\{ 
       \lambda(x_0) + R(h_n,u) \right\} du \right)^2,
\]
cf.\ equation~(\ref{e:lambdaT1}). The integrand in the right
hand side is bounded in absolute value by
\(
   \kappa^\gamma(u) \left\{
     \lambda(x_0) +  d h_n \overline{D\lambda} 
   \right\}
\)
and therefore the interaction structure contributes $O(1/n)$ to the 
mean squared error. Upon adding (\ref{e:varPoisHn}),  
\[
  \var \widehat{\lambda(x_0)} = 
\frac{ \lambda(x_0) Q(d,\gamma) }{ n h_n^d } +
O\left(\frac{1}{nh_n^{d-1}}\right) + O\left(\frac{1}{n} \right).
\]
The last term is negligible with respect to the middle one, and the
proof is complete.
\end{proof}

\begin{proof}{Corollary~\ref{C:optimal}}
By Theorem~\ref{P:biasVar},
\[
\left( \bias \widehat \lambda(x_0) \right)^2 
 = 
h_n^4 \left(
  \frac{\sum_{i=1}^d \lambda_{ii}(x_0)}{2 (d + 2\gamma + 2) }
      \right)^2 
   +  2 h_n^2 R(h_n)
       \frac{\sum_{i=1}^d \lambda_{ii}(x_0)}{2 (d + 2\gamma + 2) }
+ R(h_n)^2
\]
for a remainder term $R(h_n)$ for which there exists a scalar $M$ such
that $|R(h_n)| \leq M h_n^{2 + \alpha}$ for large $n$. Hence
\[
  \left( \bias \widehat \lambda(x_0) \right)^2  =
  h_n^4 \left(
  \frac{\sum_{i=1}^d \lambda_{ii}(x_0)}{2 (d + 2\gamma + 2) }
      \right)^2
 + O(h_n^{4 + \alpha})
\]
and the claimed expression for the mean squared error follows.
Consequently, the asymptotic mean squared error takes the form
\[
  \alpha h_n^4 + \frac{\beta}{ n h_n^d}
\]
for some scalars $\alpha, \beta > 0$. Equating
the derivative 
with respect to $h_n$ to zero yields
\[
(h_n^*)^{3 + d + 1} = \frac{d \beta}{4 n \alpha}.
\]
The second derivative with respect to $h_n$,
\(
12 \alpha h_n^2 + d ( d+1) \beta n^{-1} h_n^{-d-2},
\)
is strictly positive, so $h_n^*$ is the unique minimum.
Plugging in the expressions for $\alpha$ and $\beta$ completes the proof.
\end{proof}

\begin{proof}{Proposition~\ref{P:bigO}}
Since $h_n \to 0$, $x_0\in W$ and $W$ is open, if $n$ is large enough then 
$b(x_0,h_n) \cap W = b(x_0, h_n)$. For such $n$, by Lemma~\ref{L:moments},
\[
\widehat{ \lambda(x_0) } - \EE \widehat{ \lambda(x_0) } =
\widehat{ \lambda(x_0) } - 
\frac{1}{h_n^d} \int_{b(x_0, h_n)}
\kappa^\gamma\left(\frac{x_0-x}{h_n}\right) \lambda(x) dx
=
\frac{1}{n} \sum_{i=1}^n Z_i
\]
can be written as an average of independent random variables
\[
  Z_i := \widehat{ \lambda(x_0; \Phi_i, h_n)} - 
\frac{1}{h_n^d} \int_{b(x_0, h_n)}
\kappa^\gamma\left(\frac{x_0-x}{h_n}\right) \lambda(x) dx
\]
with $\EE Z_i = 0$.
Furthermore, by Theorem~\ref{P:biasVar}, 
\[
  \var\left( \frac{1}{n} \sum_{i=1}^n Z_i \right)  = 
  \frac{\lambda(x_0) Q(d,\gamma)}{n h_n^d} + R(h_n)
\]
for a remainder term $R(h_n)$ satisfying $n h_n^{d-1} |R(h_n)| \leq M$
for some $M>0$ and large $n$. 
By Chebychev's inequality, for all $\epsilon > 0$,
\[
  \PP\left( \left| \frac{1}{n} \sum_{i=1}^n Z_i \right| \geq
    \epsilon^{-1/2}  \sqrt { \frac{\lambda(x_0) Q(d,\gamma)} { n h_n{d} } }
     \right)
     \leq \epsilon \frac{ n h_n^{d}}{ \lambda(x_0) Q(d,\gamma)}
     \left( \frac{\lambda(x_0) Q(d,\gamma)}{n h_n^d} + R(h_n) \right).
\]
The upper bound tends to $\epsilon$
as $n\to \infty$ so that
\[
  \frac{1}{n} \sum_{i=1}^n Z_i = O_P\left(
    \sqrt{ \frac{\lambda(x_0) Q(d,\gamma)}{  nh_n^{d} }}
\right).
\]
To finish the proof, add the bias expansion 1.\ in Theorem~\ref{P:biasVar}.
\end{proof}

\subsection{Proofs of propositions and theorems: adaptive case}
\label{S:proofs-A}

\begin{proof}{Theorem~\ref{P:biasVarA}}
To prove 1.\ note that since $h_n$ goes to zero, $x_0 \in W$,
$W$ is open and $\lambda$ is bounded away from zero, for $n$ large
enough
\[
  b(x_0, h_n / c(x)) \subseteq
  b(x_0, \lambda(x_0)^{1/2} h_n / \underline \lambda^{1/2}) \subset W
\]
for all $x\in W$.  For such $n$, by a change of variables, the symmetry of 
the Beta kernels and Lemma~\ref{L:momentsA}, the bias is equal to
\begin{eqnarray}
  \nonumber
  \bias \tilde \lambda(x_0) & = & \int_{\oR^d}
    \left[ \lambda(x_0) c(x_0+h_nu)^{d+2}
      \kappa^\gamma\left( u  c(x_0 + h_nu) \right)
  - \lambda(x_0) \kappa^\gamma(u) \right] du \\
  & = &
\label{e:biasHnA}
     \lambda(x_0) \int_{\oR^d} \left[ g_u(c(x_0+h_nu)) - g_u(1) \right] du
\end{eqnarray}
for the functions $g_u: \oR \to \oR$, $u\in\oR^d$, defined by
\[
g_u(v) = v^{d+2} \kappa^\gamma(v u).
\]
Note that the integral in (\ref{e:biasHnA}) is compactly supported, 
say on $K \subset \oR^d$, a property it inherits from the Beta kernel 
since $c$ is bounded away from zero.

Since we are after the coefficient of $h_n^2$ and, for $\gamma > 2$,
$\kappa^\gamma$ is twice continuously differentiable, we use a
Taylor expansion (\ref{e:Taylor}) with $k=2$. Thus, fix $u\in K$.
Then
\begin{eqnarray}
  \nonumber
  g_u(1+v) - g_u(1) & = & Dg_u(1)v + R(u, v) \\
  & = &
  (d+2) \kappa^\gamma( u) +
   \sum_{i=1}^d D_i \kappa^\gamma(u) u_i + R(u,v)
   \label{e:TaylorG}
\end{eqnarray}
where the remainder term is 
\[
R(u,v) =  \frac{v^2}{2} D^2 g_u(1 + \theta v)
\]
for some $0 < \theta = \theta(v) < 1$ that may depend on $v\in \oR$.
Moreover, $D^2 g_u(v)$ can be written as 
\[
 (d+1) (d+2) v^{d} \kappa^\gamma(v u) +
    2(d+2)  v^{d+1} \sum_{i=1}^d D_i \kappa^\gamma(v u) u_i
   +  v^{d+2} \sum_{i=1}^d \sum_{j=1}^d D_{ij} \kappa^\gamma(v u) u_i u_j.
\]
Recall that $g_u$ is evaluated at $v$ of the form $c(x_0+h_nu)$. Since
the function $c$ is bounded we may restrict ourselves to a compact interval 
$I$ for $v$ and on this interval $D^2g_u(v)$ is bounded as $\kappa^\gamma$ 
and its partial derivatives are bounded too. Moreover,
the bound can be chosen uniformly in $u$ over the compact set $K$.
In summary, there exists a constant $C>0$ such that $|R(u,v)|\leq
Cv^2$ for all $u\in K$ and $v\in I$.

We also need a Taylor expansion (\ref{e:Taylor}) with $k=2$ for the function $c$
around $x_0 \in \oR^d$: 
\begin{equation}
  c(x_0 + h_nu) - 1  =  h_n \sum_{i=1}^d D_ic(x_0) u_i + \tilde R_n(u) 
   =   h_n \sum_{i=1}^d \frac{D_i \lambda(x_0)}{2 \lambda(x_0)} u_i
        +  \tilde R_n(u)
\label{e:TaylorF}
\end{equation}
where the remainder term is
\[
  \tilde R_n(u) = 
  \frac{h_n^2}{2} D^2 c(x_0 + \theta h_n u) (u, u) =
  \frac{h_n^2}{2} \sum_{i=1}^d \sum_{j=1}^d D_{ij} c(x_0 + \theta h_n u) u_i u_j
\]
for some $0 < \theta = \theta(u) < 1$ that may depend on $u\in K$.
The second order partial derivatives are, for $i, j \in  \{ 1, \dots, d \}$,
\[
  D_{ij} c(u) = \frac{1}{2 \sqrt{\lambda(x_0)}}
  \frac{\lambda_{ij}(u) }{ \lambda(u)^{1/2} } 
  - \frac{1}{4 \sqrt{\lambda(x_0)}} \frac{ \lambda_i(u) \lambda_j(u)
    }{ \lambda(u)^{3/2} }
\]
where we use the notation $\lambda_i = D_i\lambda$. On the compact set $K$, 
the $|u_i|$ are bounded and so are the $|D_{ij}c(u)|$ since $\lambda$ is 
bounded away from zero and twice continuously differentiable. Hence there
exists a constant $\tilde C>0$ such that 
$|\tilde R_n(u)| \leq \tilde C h_n^2$ for all $u\in K$.

Our next step is to combine the Taylor series (\ref{e:TaylorG})
and (\ref{e:TaylorF}). Write $E_n(u) := c( x_0+h_nu) - 1$. For large $n$
the bias (\ref{e:biasHnA}) can then be written as
\begin{eqnarray}
  \nonumber
  \bias \tilde \lambda(x_0) & = & \lambda(x_0) \int_{\oR^d}
\left[  g_u( 1 + E_n(u) ) - g_u(1) \right] du \\
& = & \lambda(x_0) \int_{\oR^d}
  \left[
    E_n(u) Dg_u(1) + \frac{E_n(u)^2}{2} D^2 g_u(1+\eta(u) E_n(u))
  \right]
      du
      \label{e:biasAbramson}
\end{eqnarray}
for some $\eta(u)\in (0,1)$.

We will show that the first and second order terms vanish.
By (\ref{e:TaylorG})--(\ref{e:TaylorF}), the first order term is 
equal to $h_n \lambda(x_0)$ multiplied by
\[
\int_{\oR^d} Dg_u(1)  Dc(x_0) u du =
\sum_{i=1}^d D_i c(x_0) 
  \int_{\oR^d} u_i \left[
    (d+2) \kappa^\gamma(u) +  \sum_{j=1}^d D_j \kappa^\gamma(u) u_j 
  \right] du
\]
and vanishes because of Lemma~\ref{L:Beta} and Lemma~\ref{L:PI}.

Also by (\ref{e:TaylorG})--(\ref{e:TaylorF}), the 
second order term reads
\(
{h_n^2 \lambda(x_0)} I_n / 2
\)
where 
\[
I_n = \int_{\oR^d} \left[ Dg_u(1) D^2c(x_0+\theta(u) h_nu)(u,u)
    + D^2g_u( 1 + \eta(u) E_n(u) ) \left\{ D c(x_0) u 
      \right\}^2 \right] du
\]
for some $\theta(u)$ and $\eta(u)$ in $(0,1)$. Recall that $I_n$
is compactly supported and that the integrand is bounded. Therefore, by the
dominated convergence theorem, 
\begin{eqnarray*}
  \lim_{n\to\infty} I_n & = &
 \sum_{i=1}^d \sum_{j=1}^d
  D_{ij} c(x_0) \int_{\oR^d} u_i u_j \left[
    (d+2)\kappa^\gamma(u) + \sum_{k=1}^d D_k \kappa^\gamma(u) u_k \right] du 
  \\
 & + &  \sum_{i=1}^d \sum_{j=1}^d D_i c(x_0) D_j c(x_0) 
       \int_{\oR^d}  u_i u_j \times \\
  & \times &
  \left[
    (d+1) (d+2) \kappa^\gamma(u) +
    2 (d+2) \sum_{k=1}^d D_k \kappa^\gamma(u) u_k + \sum_{k=1}^d \sum_{l=1}^d
    D_{kl} \kappa^\gamma(u) u_k u_l 
    \right] du.
\end{eqnarray*}
The first double sum is zero because of Lemma~\ref{L:Beta} and Lemma~\ref{L:PI},
the second one because of Lemma~\ref{L:Beta}, Lemma~\ref{L:PI}
and Lemma~\ref{L:PIhigh}. By the bounds on the remainder terms $R$ and $\tilde R$,
all other terms in (\ref{e:biasAbramson}) are of the order $o(h_n^2)$
and the proof is complete.

\bigskip

To prove 2.\ note that, as for the bias, $n$ may be chosen so large that
\[
  b(x_0, h_n / c(x)) \subseteq
  b(x_0, \lambda(x_0)^{1/2} h_n / \underline \lambda^{1/2}) \subset W.
\]
For such $n$, by a change of variables
\(
  u =  (x - x_0) / h_n
\)
and the symmetry of the Beta kernels, 
\begin{eqnarray}
  \nonumber
  \int_W \frac{c(x)^{2d}}{nh_n^{2d}}
      \kappa^\gamma\left( \frac{x_0 - x}{h} c(x) \right)^2 \lambda(x) dx 
& = &
 \int_{\oR^d} \frac{c(x_0 + h_nu)^{2d}}{nh_n^d}
\kappa^\gamma\left(u c(x_0 + h_nu) \right)^2 \lambda(x_0 + h_n u) du \\
& = & 
  \frac{\lambda(x_0)}{nh_n^d} \int_{\oR^d} h_u \left( c(x_0 + h_nu) \right) du
\label{e:var1Abramson}
\end{eqnarray}
for the function $h_u: \oR \to \oR$, $u\in \oR^d$, defined by
\[
  h_u(v) = v^{2d+2} \kappa^\gamma(uv)^2.
\]
Note that the integral in(\ref{e:var1Abramson}) is compactly supported, 
say on $K\subset \oR^d$, a property it inherits from the Beta kernel since $c$ 
is bounded away from zero.

Fix $u \in K$. Then by a Taylor  expansion (\ref{e:Taylor}) with $k=1$
\[
  h_u(1+v)  =  h_u(1) + Dh_u(1 + \theta(v) v)v 
\]
for some $0<\theta(v)<1$, with
\[
  D h_u(v) = 2 (d+1) v^{2d+1} \kappa^\gamma(uv)^2 + 
  2 v^{2d+2} \kappa^\gamma(uv) \sum_{i=1}^d D_i \kappa^\gamma(uv) u_i.
\]
Recall that $h_u$ is evaluated at $v$ of the form $c(x_0+h_nu)$. Since
the function $c$ is bounded we may restrict ourselves to a compact
interval $I$ for $v$ and on this interval $Dh_u(v)$ is bounded as 
$\kappa^\gamma$ and its partial derivatives are bounded too. 
Moreover, the bound can be chosen uniformly in $u$ over the
compact set $K$. In summary, there exists a constant $H$ such that
$|Dh_u(v)| \leq H$ for all $u\in K$ and $v\in I$. Hence, with $E_n(u) =
c(x_0 + h_nu) - 1$ as before, (\ref{e:var1Abramson}) can be written as
\[
  \frac{\lambda(x_0)}{nh_n^d} \int_{\oR^d} \left[
    h_u(1) + E_n(u) D h_u( 1 + \theta(u) E_n(u))
  \right] du 
  = \frac{\lambda(x_0)}{nh_n^d} \int_{\oR^d} \kappa^\gamma(u)^2 du + R_n
\]
for a remainder term
\[
  R_n = 
\frac{\lambda(x_0)}{nh_n^d} \int_{\oR^d} E_n(u) D h_u( 1 + \theta(u)E_n(u)) du
\]
with $0 < \theta(u) < 1$. By (\ref{e:TaylorF}),
\[
| E_n(u) D h_u( 1 + \theta(u) E_n(u)) | \leq H |E_n(u) | \leq
h_n  H \left| \sum_{i=1}^d \frac{\lambda_i(x_0)}{2\lambda(x_0)} u_i \right|
+ H \left| \tilde R_n(u) \right|.
\]
As $u \in K$ and, for such $u$, $|\tilde R_n(u)| \leq \tilde C h_n^2$, 
\[
\frac{\lambda(x_0)}{nh_n^d} \int_{\oR^d} h_u \left( c(x_0+h_nu) \right) du  =
\frac{\lambda(x_0)}{nh_n^d} Q(d,\gamma) + O\left(\frac{1}{nh_n^{d-1}}\right).
\]

We will finally show that the contribution of the interaction structure
(through the pair correlation function) to the variance (\ref{e:varA})
vanishes. Again, choose $n$ so large that 
\[
  b(x_0, h_n / c(x)) \subseteq
  b(x_0, \lambda(x_0)^{1/2} h_n / \underline \lambda^{1/2}) \subset W.
\]
For such $n$, by a change of variables and the symmetry of the Beta kernels, 
and writing $\bar g$ for an upper bound to the pair correlation function, 
the integral in the last line in (\ref{e:varA}) can be bounded
in absolute value by
\[
  \frac{(1+\bar g) \lambda(x_0)^2}{n}
  \left( \int_{\oR^d} c(x_0 + h_nu)^{d+2}
    \kappa^\gamma( u c(x_0 + h_nu) ) du \right)^2 =
  O\left( \frac{1}{n} \right)
\]
since the integral is compactly supported and both $c$ and $\kappa^\gamma$ 
are bounded.
\end{proof}

\begin{proof}{{\bf of Theorem~\ref{P:biasVarHall}}} \\
As in the proof or Theorem~\ref{P:biasVarA}, the bias is given by
(\ref{e:biasHnA}) and the integral involved is supported on a 
compact set $K\subset \oR^d$. Since we are after the coefficient of 
$h_n^4$ and, for $\gamma > 5$, a Taylor expansions (\ref{e:Taylor}) 
with $k=5$ applies for both $c$ and $g_u$. For the former, $E_n(u) =
c(x_0 + h_n u) - 1$ is equal to 
\begin{equation}\label{e:Hallc}
 h_n Dc(x_0) u +
  \frac{1}{2} h_n^2 D^2 c(x_0) (u,u) + 
  \frac{1}{6} h_n^3 D^3 c(x_0) (u,u,u) + 
  \frac{1}{24} h_n^4 D^4 c(x_0) (u,u,u,u) 
\end{equation}
up to a remainder term
\[
  \tilde R_n(u) = \frac{1}{120} h_n^5 D^5c(x_0 + \theta h_nu)(u,u,u,u,u)
\]
for some $0<\theta=\theta(u)<1$ that may depend on $u\in K$. Since
$\lambda$ is bounded away from zero and five times continuously
differentiable, $| \tilde R_n(u)| \leq \tilde C h_n^5$ for all $u\in K$.
Similarly, for fixed $u\in K$,
\[
  g_u(1+v) - g_u(1) = v g_u^\prime(1)+
  \frac{1}{2} v^2 g_u^{\prime\prime}(1) +
  \frac{1}{6} v^3 g_u^{\prime \prime \prime}(1) +
  \frac{1}{24} v^4 g_u^{(iv)}(1) + R(u,v),
\]
where $R(u,v) = v^2 D^5g_u(1+ \theta v) / 120$ for some $\theta = \theta(v)$
in $(0,1)$ that may depend on $v\in\oR$. Recall that $g_u$ is evaluated at 
$v$ of the form $c(x_0+h_nu)$, $u\in K$. Since the function $c$ is bounded 
we may restrict ourselves to a compact interval $I$ for $v$ and on this 
interval $D^5 g_u(v)$ is bounded as $\kappa^\gamma$ and its partial 
derivatives up to fifth order are bounded too. Moreover, the bound can 
be chosen uniformly in $u$ over the compact set $K$.
In summary, $|R(u,v)|\leq C |v|^5$ for $u\in K$ and $v\in I$.

Next, plug the Taylor expansions into (\ref{e:biasHnA}). Then
\[
  \bias \tilde \lambda(x_0) = \lambda(x_0) \int_{\oR^d} \left[
    g_u(1 + E_n(u) ) - g_u(1) \right] du = \lambda(x_0) \int_{\oR^d} R(u, E_n(u)) du
\]
\begin{equation} \label{e:Hall}
  + \lambda(x_0) \int_{\oR^d} \left[ g_u^\prime(1) E_n(u) +
  \frac{1}{2} E_n(u)^2 g_u^{\prime\prime}(1) +
  \frac{1}{6} E_n(u)^3 g_u^{\prime \prime \prime}(1) +
  \frac{1}{24} E_n(u)^4 g_u^{(iv)}(1) \right] du.
\end{equation}
By Theorem~\ref{P:biasVarA}, the first and second order terms are zero.
We will show that the third order term $h_n^3 \lambda(x_0) I_{n,3}$ 
vanishes too. By (\ref{e:Hallc}),
\[
  I_{n,3} =  \int_{\oR^d}\left[
    \frac{1}{6} g_u^\prime(1) D^3 c(x_0) ( u, u, u) +
    \frac{1}{2} g_u^{\prime\prime}(1) Dc(x_0) u D^2c(x_0)(u,u) +
   \frac{1}{6} g_u^{\prime\prime\prime}(1) (D c(x_0) u)^3    
  \right] du.
\]
Lemma~\ref{L:Dgu} implies that the first term of $I_{n,3}$ is 
\[
  \frac{1}{6} \sum_{i=1}^d \sum_{j=1}^d \sum_{k=1}^d 
   D_{ijk} c(x_0) \int_{\oR^d} u_i u_j u_k \left[
    (d+2) \kappa^\gamma(u) + \sum_l u_l D_l \kappa^\gamma(u)
    \right] du,
\]
which vanishes by the symmetry properties of $\kappa^\gamma$,
Lemma~\ref{L:Beta} and Lemma~\ref{L:PI}. By Lemma~\ref{L:Dgu}, the second term
is a linear combination of integrals of the form
\[
 \int_{\oR^d} u_i u_j u_k
 \left[
    (d+1)(d+2) \kappa^\gamma(u) + 2(d+2) \sum_{l=1}^d u_l D_l\kappa^\gamma(u)
    + \sum_{l=1}^d \sum_{m=1}^d u_l u_m D_{lm} \kappa^\gamma(u)
    \right] du
\]
which vanish because of the symmetry properties of the Beta kernel
and integration by parts. Similar arguments apply to the third and last 
term of $I_{n,3}$, which by Lemma~\ref{L:Dgu} is a linear combination of 
integrals of the form
\[
  \int_{\oR^d} u_i u_j u_k
\{
    d(d+1)(d+2) \kappa^\gamma(u) +
    3 (d+1)(d+2)\sum_{l=1}^d  u_l D_l \kappa^\gamma(u) +
  \]
  \[
    3(d+2) \sum_{l=1}^d \sum_{m=1}^d u_l u_m D_{lm} \kappa^\gamma(u) +
    \sum_{l=1}^d \sum_{m=1}^d \sum_{n=1}^d u_l u_m u_n D_{lmn} \kappa^\gamma(u) 
    \} du.
  \]
  
The coefficient of $h_n^4$ in (\ref{e:Hall}) reads $\lambda(x_0)
\int A(u; x_0) du$ with $A$ as claimed, and does not vanish in general. 
Finally, by the bounds on the remainder terms $R$ and $\tilde R_n$, 
all other terms in (\ref{e:Hall}) are of the order $o(h_n^4)$ and the proof
is complete.
\end{proof}

\begin{proof}{Proposition~\ref{P:dim1}}
By Theorem~\ref{P:biasVarHall}, the coefficient of $h_n^4$ is
\(
\lambda(x_0) \int_{\oR^d} A(u; x_0) du,
\)
where
\begin{eqnarray*}
  \frac{ A(u; x_0) }{u^4 } & = &\frac{1}{24} g_u^\prime(1) c^{(iv)}(x_0) +
  \frac{1}{4} g_u^{\prime\prime\prime}(1) (c^\prime(x_0))^2 c^{\prime\prime}(x_0)
+  \frac{1}{24} g_u^{(iv)}(1) ( c^\prime(x_0))^4 \\
  & + & 
     \frac{1}{2} g_u^{\prime \prime}(1) \left[
     \frac{1}{3} c^\prime(x_0) c^{\prime \prime\prime}(x_0) +
     \frac{1}{4} (c^{\prime\prime}(x_0))^2
   \right] .
\end{eqnarray*}
Lemma~\ref{L:Dgu} and Lemma~\ref{L:PI4-high} can be used to derive
the following equations:
\begin{eqnarray*}
\int_{\oR^d} u^4 g_u^\prime(1) du & = & \int_{\oR^d} u^4 \left[
     3 \kappa^\gamma(u) + u D \kappa^\gamma(u) \right] du = -2 V_4(1,\gamma); \\
\int_{\oR^d} u^4 g_u^{\prime\prime}(1) du & = &  \int_{\oR^d} u^4 \left[
     6 \kappa^\gamma(u) + 6u D \kappa^\gamma(u) + u^2 D_{11} \kappa^\gamma(u)
     \right] du = 6 V_4(1,\gamma); \\
\int_{\oR^d} u^4 g_u^{\prime\prime\prime}(1) du & = &  \int_{\oR^d} u^4 \left[
    6 \kappa^\gamma(u) + 18 u D\kappa^\gamma(u) + 9 u^2 D_{11} \kappa^\gamma(u)
    + u^3 D_{111}  \kappa^\gamma(u) \right] du \\
   & = & - 24 V_4(1,\gamma); \\
\int_{\oR^d} u^4 g_u^{(iv)}(1) du & = &  \int_{\oR^d} u^4 \left[
   24 u D_1 \kappa^\gamma(u) + 36 u^2 D_{11} \kappa^\gamma(u) +
   12 u^3 D_{111}  \kappa^\gamma(u) + u^4 D_{1111}\kappa^\gamma(u)  \right] du \\
& = & 120 V_4(1, \gamma).
\end{eqnarray*}
Hence, upon a rearrangement of terms,
\begin{equation} \label{e:A}
  \int_{\oR^d} A(u; x_0) du =  \frac{-1}{12} c^{(iv)}(x_0)
  + c^{\prime\prime\prime}(x_0) c^\prime(x_0) +
  \frac{3}{4} ( c^{\prime\prime}(x_0) )^2
  - 6 c^{\prime\prime}(x_0) ( c^\prime(x_0) )^2 + 5 ( c^\prime(x_0) )^4.
\end{equation}

It remains to calculate and plug in expressions for the derivatives
of $c$ in terms of the underlying intensity function $\lambda$.
Now
 \begin{eqnarray*}
   c^\prime(x_0) & = &  \frac{\lambda^\prime(x_0)}{2\lambda(x_0)} \\
   c^{\prime\prime}(x_0) & = &   \frac{\lambda^{\prime\prime}(x_0)}{2\lambda(x_0)}
    - \frac{(\lambda^\prime(x_0))^2}{4 \lambda(x_0)^2} \\
   c^{\prime\prime\prime}(x_0) & = &
    \frac{\lambda^{\prime\prime\prime}(x_0)}{2\lambda(x_0)}
    -\frac{3\lambda^\prime(x_0)\lambda^{\prime\prime}(x_0)}{4 \lambda(x_0)^2}
   + \frac{3(\lambda^\prime(x_0))^3}{8\lambda(x_0)^3} \\
   c^{(iv)}(x_0) & = &
    \frac{\lambda^{(iv)}(x_0)}{2\lambda(x_0)}
    -\frac{4\lambda^\prime(x_0)\lambda^{\prime\prime\prime}(x_0) + 3(\lambda^{\prime\prime}(x_0))^2}{4 \lambda(x_0)^2}
   + \frac{9(\lambda^\prime(x_0))^2 \lambda^{\prime\prime}(x_0)}{4\lambda(x_0)^3} 
   - \frac{15(\lambda^\prime(x_0))^4}{16\lambda(x_0)^4} 
\end{eqnarray*}
can be plugged into (\ref{e:A}) to obtain
\[
  -\frac{1}{24} \frac{\lambda^{(iv)}(x_0)}{\lambda(x_0)} +
    \frac{1}{12}\frac{\lambda^\prime(x_0) \lambda^{\prime\prime\prime}(x_0)}{\lambda(x_0)^2}
    + \frac{1}{16}\frac{(\lambda^{\prime\prime}(x_0))^2}{\lambda(x_0)^2}
    -\frac{3}{16} \frac{\lambda^{\prime\prime}(x_0)(\lambda^\prime(x_0))^2}{\lambda(x_0)^3}
    + \frac{5}{64} \frac{(\lambda^\prime(x_0))^4}{\lambda(x_0)^4}
\]
\[
+  \frac{1}{4}\frac{\lambda^{\prime\prime\prime}(x_0)\lambda^\prime(x_0)}{\lambda(x_0)^2}
  - \frac{3}{8}\frac{ (\lambda^\prime(x_0))^2 \lambda^{\prime\prime}(x_0)}{\lambda(x_0)^3}
  + \frac{3}{16}\frac{(\lambda^\prime(x_0))^4}{\lambda(x_0)^4}
\]
\[
+  \frac{3}{16}\frac{(\lambda^{\prime\prime}(x_0))^2}{\lambda(x_0)^2}
-\frac{3}{16}\frac{\lambda^{\prime\prime}(x_0) (\lambda^\prime(x_0))^2}{\lambda(x_0)^3}
+ \frac{3}{64} \frac{(\lambda^\prime(x_0))^4}{\lambda(x_0)^4}
\]
\[
  -\frac{6}{8}\frac{\lambda^{\prime\prime}(x_0) (\lambda^\prime(x_0))^2}{\lambda(x_0)^3} +
  \frac{6}{16}\frac{(\lambda^\prime(x_0))^4}{\lambda(x_0)^4}
+ \frac{5}{16} \frac{(\lambda^\prime(x_0))^4}{\lambda(x_0)^4}
\]
and the claim follows upon a rearrangement of terms.
\end{proof}

\begin{proof}{Proposition~\ref{P:dim2}}
Theorem~\ref{P:biasVarHall} states that the coefficient of $h_n^4$ is
$\lambda(x_0) \int A(u; x_0) du$ with an explicit expression for $A(u; x_0)$.
The non-zero terms in this expression can be reduced by repeated partial
integration to a scalar multiple of either $V_4(2,\gamma)$ or $V_2(2,\gamma)$
as the integrals of other fourth order products in $u\in\oR^2$ with respect
to $\kappa^\gamma$ vanish by the symmetry properties of the Beta kernel.

The scalar multipliers can be calculated as in Lemma~\ref{L:PI4-high}:
for $i\neq j \in \{ 1, 2 \}$, integrals with respect to first order partial 
derivatives reduce to
\[
  \int_{\oR^d} u_i^4 u_j D_j\kappa^\gamma(u) du =  - V_4(2,\gamma); \quad
  \int_{\oR^d} u_i^3 u_j^2 D_i \kappa^\gamma(u) du  =  - 3 V_2(2,\gamma),
\]
integrals with respect to second order partial derivatives reduce to
\[
  \int_{\oR^d} u_i^5 u_j D_{ij} \kappa^\gamma(u) du  =  5 V_4(2,\gamma); \quad
  \int_{\oR^d} u_i^4 u_j^2 D_{jj} \kappa^\gamma(u) du  =  2 V_4(2,\gamma);
\]
\[
  \int_{\oR^d} u_i^4 u_j^2 D_{ii} \kappa^\gamma(u) du  =  12 V_2(2,\gamma); \quad
  \int_{\oR^d} u_i^3 u_j^3 D_{ij} \kappa^\gamma(u) du  =  9 V_2(2,\gamma),
\]
and integrals with respect to third order partial derivatives
are reduced as
\[
  \int_{\oR^d} u_i^6 u_j D_{iij} \kappa^\gamma(u) du =  -30 V_4(2,\gamma) 
\]
and
\[
  \int_{\oR^d} u_i^5 u_j^2 D_{ijj} \kappa^\gamma(u) du  =  -10 V_4(2,\gamma); \quad
  \int_{\oR^d} u_i^4 u_j^3 D_{jjj} \kappa^\gamma(u) du  =  -6V_4(2,\gamma),
\]
\[
   \int_{\oR^d} u_i^5 u_j^2 D_{iii} \kappa^\gamma(u) du  =  -60 V_2(2,\gamma); \quad
   \int_{\oR^d} u_i^4 u_j^3 D_{iij} \kappa^\gamma(u) du  =  -36 V_2(2,\gamma).
\]
Finally,
\[
  \int_{\oR^d} u_i^7 u_j D_{iiij} \kappa^\gamma(u) du  =  210 V_4(2,\gamma); \quad
  \int_{\oR^d} u_i^6 u_j^2 D_{iijj} \kappa^\gamma(u) du  =  60 V_4(2,\gamma);
\]
\[
  \int_{\oR^d} u_i^5 u_j^3 D_{ijjj} \kappa^\gamma(u) du =  30 V_4(2,\gamma); \quad
  \int_{\oR^d} u_i^4 u_j^4 D_{jjjj} \kappa^\gamma(u) du  =  24 V_4(2,\gamma),
\]
and
\[
  \int_{\oR^d} u_i^6 u_j^2 D_{iiii} \kappa^\gamma(u) du  =   360 V_2(2,\gamma); \quad 
  \int_{\oR^d} u_i^5 u_j^3 D_{iiij} \kappa^\gamma(u) du  =  180 V_2(2,\gamma);
\]
\[
  \int_{\oR^d} u_i^4 u_j^4 D_{iijj} \kappa^\gamma(u) du  =  144 V_2(2,\gamma) .
\]
Evaluation of the expression for $A(u; x_0)$ implies the claim by
elementary but tedious calculation. For example, the coefficient of
$D_{11}c(x_0) D_{22} c(x_0)$ arises from terms with these coefficients in
\[
  \frac{1}{8} \int_{\oR^d} (D^2 c(x_0)(u,u))^2 g_u^{\prime \prime}(1) du_1 du_2,
\]
which, by Lemma~\ref{L:Dgu}, is equal to
\[
%\frac{1}{8} \int_{\oR^d} (D^2 c(x_0)(u,u))^2 g_u^{\prime \prime}(1) du_1 du_2 = 
  \frac{1}{8} \sum_{i=1}^2 \sum_{j=1}^2 \sum_{k=1}^2 \sum_{l=1}^2
  D_{ij}c(x_0) D_{kl} c(x_0) 
  \int_{\oR^d} u_i u_j u_k u_l \left[ 12 \kappa^\gamma(u) + 8 D \kappa^\gamma(u) +
        D^2\kappa^\gamma(u) \right] du_1 du_2.
\]
The desired coefficients occur when $i=j=1$ and $k=l=2$ or when
$i=j=2$ and $k=l=2$. Therefore
\[
\frac{D_{11}c(x_0) D_{22} c(x_0)}{4} \int_{\oR^d} u_1^2 u_2^2  \left[ 12 \kappa^\gamma(u) + 8 D \kappa^\gamma(u) +
        D^2\kappa^\gamma(u) \right] du_1 du_2,
\]
so the coefficient of $D_{11}c(x_0) D_{22} c(x_0)$ is equal to
\[
  \frac{1}{4} \left[ 12 V_2(2,\gamma) + 
    8 ( -3 - 3) V_2(2,\gamma) + ( 12 + 9 + 9 + 12 ) V_2(2,\gamma) \right]
  = \frac{6V_2(2,\gamma)}{4}.
\]
\end{proof}

\begin{proof}{Corollary~\ref{C:optimalHall}}
By Theorem~\ref{P:biasVarHall},
\[
\left( \bias \widehat \lambda(x_0) \right)^2 
=
  \lambda(x_0)^2 \left( \int_{\oR^d} A(u; x_0) du \right)^2 h_n^8 
   +  2 h_n^4 R(h_n) \lambda(x_0) \int_{\oR^d} A(u; x_0) du + R(h_n)^2
\]
for a remainder term $R(h_n)$ satisfying $R(h_n)/h_n^4 \to 0$
for large $n$. Hence
\[
  \left( \bias \widehat \lambda(x_0) \right)^2  =
  \lambda(x_0)^2 \left( \int_{\oR^d} A(u; x_0) du \right)^2 h_n^8 
 + o(h_n^{8})
\]
from which the claimed expression for the mean squared error follows.
Consequently, the asymptotic mean squared error takes the form
\[
  \alpha h_n^8 + \frac{\beta}{ n h_n^d}
\]
for some scalars $\alpha, \beta > 0$. Equating the derivative 
with respect to $h_n$ to zero yields
\[
(h_n^*)^{7 + d + 1} = \frac{d \beta}{8 n \alpha}.
\]
The second derivative with respect to $h_n$,
\(
  56 \alpha h_n^6 + d ( d+1) \beta n^{-1} h_n^{-d-2},
\)
is strictly positive, so $h_n^*$ is the unique minimum. Plugging in 
the expressions for $\alpha$ and $\beta$ completes the proof.
\end{proof}

\begin{proof}{Proposition~\ref{P:bigOHall}}
Since $h_n \to 0$, $x_0 \in W$ and $W$ is open, if $n$ is large enough the ball centred
at $x_0$ with radius $\lambda(x_0)^{1/2} h_n / \underline \lambda^{1/2}$
is contained in $W$. For such $n$, by Lemma~\ref{L:momentsA},
\[
\tilde \lambda(x_0) - \EE \tilde \lambda(x_0) = 
\tilde \lambda(x_0) - 
\frac{1}{h_n^d} \int_{\oR^d} c(x)^d \kappa^\gamma\left(\frac{x_0-x}{h_n} c(x) \right)
\lambda(x) dx
=
\frac{1}{n} \sum_{i=1}^n Z_i
\]
can be written as an average of independent random variables
\[
  Z_i :=  \tilde \lambda(x_0; \Phi_i, h_n) -
\frac{1}{h_n^d} \int_{\oR^d} c(x)^d \kappa^\gamma\left(\frac{x_0-x}{h_n} c(x) \right)
\lambda(x) dx
\]
with $\EE Z_i = 0$. Furthermore, by Theorem~\ref{P:biasVarHall} 
\[
  \var\left( \frac{1}{n} \sum_{i=1}^n Z_i \right)  = 
  \frac{\lambda(x_0) Q(d,\gamma)}{n h_n^d} + R(h_n)
\]
for a remainder term $R(h_n)$ satisfying $n h_n^{d-1} |R(h_n)| \leq M$
for some $M>0$ and large $n$. By Chebychev's inequality, for all $\epsilon > 0$,
\[
  \PP\left( \left| \frac{1}{n} \sum_{i=1}^n Z_i \right| \geq
    \epsilon^{-1/2}  \sqrt { \frac{\lambda(x_0) Q(d,\gamma)} { n h_n{d} } }
     \right)
     \leq \epsilon \frac{ n h_n^{d}}{ \lambda(x_0) Q(d,\gamma)}
     \left( \frac{\lambda(x_0) Q(d,\gamma)}{n h_n^d} + R(h_n) \right).
\]
The upper bound tends to $\epsilon$ as $n\to \infty$ so that
\[
  \frac{1}{n} \sum_{i=1}^n Z_i = O_P\left(
    \sqrt{ \frac{\lambda(x_0) Q(d,\gamma)}{  nh_n^{d} }}
\right).
\]
To finish the proof, add the bias expansion 1.\ in Theorem~\ref{P:biasVarHall}.
\end{proof}

\end{document}